\documentclass[11pt,oneside]{amsart}
\usepackage[top=25mm, bottom=25mm, left=20mm, right=20mm]{geometry}
\usepackage{amsfonts,amssymb,amsmath,amsthm,amsxtra}
\usepackage[T1]{fontenc}
\usepackage[utf8]{inputenc}
\usepackage{bm}
\usepackage{mathtools}
\usepackage{dsfont}
\usepackage{mathrsfs}
\usepackage{enumitem}
\allowdisplaybreaks

\usepackage{graphics}
\usepackage{hyperref}

\numberwithin{equation}{section}
\newtheorem{theorem}[equation]{Theorem}
\newtheorem{corollary}[equation]{Corollary}
\newtheorem{lemma}[equation]{Lemma}
\newtheorem{proposition}[equation]{Proposition}
\theoremstyle{definition}
\newtheorem{remark}[equation]{Remark}
\newtheorem{definition}[equation]{Definition}

\DeclareMathOperator\supp{supp}
\author{Leonidas Daskalakis} 
\address[Leonidas Daskalakis]{Institute of Mathematics,
Polish Academy of Sciences,
\'Sniadeckich 8,
00-656 Warszawa, Poland}
\email{ldaskalakis@impan.pl}
\begin{document}
\title[Bilinear ergodic theorems, Roth's Theorem and Corners along fractional powers]{Ergodic theorems for bilinear averages, Roth's Theorem and Corners along fractional powers}
\begin{abstract}We prove that for every $c\in(1,23/22)$, every probability space $(X,\mathcal{B},\mu)$ equipped with two commuting measure-preserving transformations $T,S\colon X\to X$ and every $f,g\in L^{\infty}_{\mu}(X)$ we have that the $L^2_{\mu}(X)$-limit
\[
\lim_{N\to\infty}\frac{1}{N}\sum_{n=1}^Nf(T^{\lfloor n^c\rfloor}x)g(S^{\lfloor n^c\rfloor}x)
\]
equals the $L^2_{\mu}(X)$-limit $\lim_{N\to\infty}\frac{1}{N}\sum_{n=1}^Nf(T^{n}x)g(S^{n}x)$. The approach is based on the author's recently developed technique which may be thought of as a change of variables. We employ it to establish several new results along fractional powers including a Roth-type result for patterns of the form $x,x+\lfloor y^c \rfloor,x+2\lfloor y^c \rfloor$ as well as its ``corner'' counterpart. The quantitative nature of the former result allows us to recover the analogous one in the primes. Our considerations give partial answers to Problem~29 and Problem~30 from Frantzikinakis' open problems survey on multiple ergodic averages \cite{OpProb}. Notably, we cover more general sparse orbits $(\lfloor h(n)\rfloor)_{n\in\mathbb{N}}$, where $h$ belongs to the class of the so-called $c$-regularly varying functions, addressing for example even the orbit $(\lfloor n\log n\rfloor)_{n\in\mathbb{N}}$.
\end{abstract}
\maketitle

\section{Introduction}
The first main result of the present work is the following bilinear ergodic theorem with two commuting transformations along fractional powers, providing a partial answer to Problem~29 from Frantzikinakis' open problems survey on multiple ergodic averages \cite{OpProb}.
\begin{theorem}\label{Ncsimple}Assume $c\in(1,23/22)$, and let $(X,\mathcal{B},\mu)$ be a probability space and $T,S$ measure-preserving transformations on $X$ which commute. Then for every $f,g\in L^{\infty}_{\mu}(X)$ we have that the $L^2_{\mu}(X)$-limit
\begin{equation}\label{nonconvaveasy}
\lim_{N\to\infty}\frac{1}{N}\sum_{n=1}^Nf(T^{\lfloor n^c\rfloor}x)g(S^{\lfloor n^c\rfloor}x)
\end{equation}
equals the $L^2_{\mu}(X)$-limit $\lim_{N\to\infty}\frac{1}{N}\sum_{n=1}^Nf(T^{n}x)g(S^{n}x)$.
\end{theorem}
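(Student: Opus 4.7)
The plan is to transform the average along $\lfloor n^{c}\rfloor$ into a smoothly weighted average along the integers, dispose of the resulting error via a bilinear exponential sum estimate, and then pass back to the unweighted Cesàro average via Abel summation, invoking the known $L^{2}_{\mu}(X)$-convergence of bilinear averages for two commuting transformations in order to identify the limit.

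\emph{Step 1 (change of variables).} Set $F(m):=f(T^{m}x)g(S^{m}x)$ and $M:=\lfloor N^{c}\rfloor$. Since $\#\{n\le N:\lfloor n^{c}\rfloor=m\}=\lfloor(m+1)^{1/c}\rfloor-\lfloor m^{1/c}\rfloor=\tfrac{1}{c}m^{1/c-1}+E(m)$, grouping the $n$-sum according to the value of $\lfloor n^{c}\rfloor$ gives the identity
\begin{equation*}
\frac{1}{N}\sum_{n=1}^{N}F(\lfloor n^{c}\rfloor)
=\frac{1}{cN}\sum_{m=1}^{M}m^{1/c-1}F(m)
+\frac{1}{N}\sum_{m=1}^{M}E(m)F(m),
\end{equation*}
where $E(m)$ is an oscillatory error governed by the fractional parts $\{m^{1/c}\}$.

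\emph{Step 2 (exponential sum input).} Fourier-expanding the $1$-periodic part controlling $E$ and truncating at a suitable frequency cutoff, the second sum is reduced to $L^{2}_{\mu}(X)$-decay estimates of the form
\begin{equation*}
\frac{1}{N}\sum_{m=1}^{M}e^{2\pi ikm^{1/c}}f(T^{m}x)g(S^{m}x),\qquad k\neq 0.
\end{equation*}
A van der Corput / PET-type inequality tailored to two commuting transformations bounds the square of this norm by classical one-dimensional exponential sums with phase a perturbation of $k\,m^{1/c}$. It is here that the numerical range $c\in(1,23/22)$ enters: in that range the best available Weyl-type bounds for such sums give a genuine power saving sufficient to overcome all the losses from the PET reduction and the Fourier cutoff, with the trivial bound controlling the high-frequency tail.

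\emph{Step 3 (unweighting via Abel summation).} Since $c>1$, the sequence $a_{m}:=m^{1/c-1}$ is positive and decreasing; Abel summation applied to $\sum_{m\le M}a_{m}F(m)$ expresses $\tfrac{1}{cN}\sum_{m=1}^{M}m^{1/c-1}F(m)$ as a weighted average of the Cesàro means $\tfrac{1}{m}\sum_{j\le m}F(j)$ with nonnegative weights whose total mass is $1+o(1)$, using $M^{1/c}\sim N$. The $L^{2}_{\mu}(X)$-convergence of the classical bilinear average $\tfrac{1}{M}\sum_{m\le M}f(T^{m}x)g(S^{m}x)$ for two commuting transformations is known, and a Toeplitz-type argument transfers this convergence to the weighted combination, identifying its limit with the unweighted bilinear limit in the statement.

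\emph{Main obstacle.} The whole argument stands or falls with Step~2. Inserting the oscillating weight $e^{2\pi ikm^{1/c}}$ into a bilinear ergodic average with two commuting transformations requires a PET reduction that produces one-variable exponential sums whose cancellation must beat the losses of every van der Corput step; the threshold $23/22$ is precisely where the current Weyl-type bounds for phases of type $k\,m^{1/c}$ produce enough power saving uniformly in $k$ up to the Fourier cutoff. Calibrating that cutoff so that the saved powers of $N$ dominate both the low-frequency contributions and the trivial bound for the tail is the delicate quantitative heart of the proof.
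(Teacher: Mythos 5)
Your overall architecture matches the paper's: decompose the thin average via the counting function into a smooth main term plus an oscillatory error, handle the main term by Abel summation and a Toeplitz-type lemma together with Tao's $L^2$-convergence theorem for commuting transformations, and kill the error by Fourier-expanding the sawtooth and proving a power-saving estimate. Your Steps~1 and~3 are essentially the content of Proposition~\ref{MaintermExtraction}.

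The gap is in Step~2, which carries all the weight. The claim that ``a van der Corput / PET-type inequality $\ldots$ bounds the \emph{square} of this norm by classical one-dimensional exponential sums'' is not correct as stated: a single Cauchy--Schwarz/van der Corput application leaves you with shifted products $f\cdot\overline{f\circ T^{h_1}}$ and $g\cdot\overline{g\circ S^{h_1}}$, so you still face a bilinear ergodic average with unknown $1$-bounded data rather than an exponential sum. One must iterate three times (an eighth power, not a square) to eliminate the functions entirely, and what comes out is control of the bilinear form by the (unnormalised) $U^3$-Gowers norm of the oscillatory weight; this is exactly Lemma~\ref{U3forFRA}, proved after transferring to functions on $\mathbb{Z}^2$ via Calder\'on's transference principle, a step you omit but implicitly need before any ``classical'' exponential-sum bound makes sense. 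Once the functions are gone, moreover, the exponential sums that appear do not have phase ``a perturbation of $km^{1/c}$'': bounding $\|w^{\text{main}}_{N,l}\|_{U^3}$ goes through the $U^2$ inverse theorem applied to $\Delta_{h_3}L_{N,l}$, so the phases are differences $k\varphi(n+h_3)-k\varphi(n)$ twisted by a linear frequency $\xi n$, where $\varphi$ is the compositional inverse of $h$ --- this is the content of Lemma~3.11 of \cite{NcminusNc}. Finally, ``PET'' is the wrong name for the mechanism: PET is designed for polynomial orbits of a single transformation, whereas the commuting two-transformation setting is governed by $U^3$/box-norm control, which is what Lemma~\ref{U3forFRA} records. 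None of this contradicts your high-level plan, but as written Step~2 both understates the number of Cauchy--Schwarz iterations required and misidentifies what the resulting exponential sums look like, which is precisely where the threshold $c<23/22$ is earned.
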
 
In the sequel, we work with more general sparse orbits, derived from the so-called $c$-regularly varying functions. Before stating the rest of our main theorems, we introduce this wide class of functions below.
\begin{definition}[$c$-regularly varying functions] Fix $c\in[1,2)$ and $x_0\ge 1$. We define the class of $c$-regularly varying functions $\mathcal{R}_c$ as the set of all functions $h\in\mathcal{C}^3\big([x_0,\infty)\to[1,\infty)\big)$ such that the following conditions hold:
\begin{itemize}
\item[i)]$h'>0$, $h''> 0$\text{ and }
\item[ii)] the function $h$ is of the form 
\[
h(x)=Cx^c\exp\bigg(\int_{x_0}^x\frac{\vartheta(t)}{t}dt\bigg)\text{,}
\]
where $C$ is a positive constant and $\vartheta\in\mathcal{C}^3\big([x_0,\infty)\to\mathbb{R}\big)$ satisfies
\[
\vartheta(x)\to 0\text{,}\quad x\vartheta'(x)\to 0\text{,}\quad x^2\vartheta''(x)\to 0\text{,}\quad x^3\vartheta'''(x)\to 0\quad\text{as}\quad x\to \infty\text{.}
\]
\item[iii)] If $c=1$, then $\vartheta$ will additionally be assumed to be positive, decreasing, and for every $\varepsilon>0$
\[
\vartheta(x)^{-1}\lesssim_{\varepsilon}x^{\varepsilon}\quad\text{and}\quad \lim_{x\to\infty}xh(x)^{-1}=0\text{.}
\]
Moreover, for $c=1$ we assume
\[
\frac{x\vartheta'(x)}{\vartheta(x)}\to 0\text{,}\quad\frac{x^2\vartheta''(x)}{\vartheta(x)}\to 0\text{,}\quad\frac{x^3\vartheta'''(x)}{\vartheta(x)}\to 0\quad \text{as}\quad x\to\infty\text{.}
\]
\end{itemize}
\end{definition}
The family of $c$-regularly varying functions has been introduced in \cite{MMR} and \cite{WT11}, and one may think of functions in $\mathcal{R}_c$ as appropriate perturbations of the fractional monomial $x^c$, notably, including the following functions for $c>1$
\begin{equation}\label{examples}
x^c\log^{a}x\text{,}\quad x^ce^{a\log^b x}\text{,}\quad x^c\underbrace{\log\circ \cdots \circ \log x}_{k \text{ times}}\text{,}\quad\text{for any $a\in\mathbb{R}$, $b\in(0,1)$, $k\in\mathbb{N}$ fixed.}
\end{equation}
For $c=1$ the examples above do belong in $\mathcal{R}_1$ provided that $a>0$. Let us note that there exist $c$-regularly varying functions which do not belong to any Hardy field since there exist $h\in\mathcal{R}_c$ which are not in $\mathcal{C}^{\infty}$.

We immediately derive Theorem~$\ref{Ncsimple}$ by establishing the following theorem, covering simultaneously all the orbits induced by any of the $c$-regularly varying functions from $\eqref{examples}$.
\begin{theorem}\label{FRA}Assume $c\in[1,23/22)$ and $h\in\mathcal{R}_c$. Let $(X,\mathcal{B},\mu)$ be a probability space and $T,S$ measure-preserving transformations on $X$ which commute. Then for every $f,g\in L^{\infty}_{\mu}(X)$ we have that the $L^2_{\mu}(X)$-limit
\begin{equation}\label{nonconvav}
\lim_{N\to\infty}\frac{1}{N}\sum_{n=1}^Nf(T^{\lfloor h(n)\rfloor}x)g(S^{\lfloor h(n)\rfloor}x)
\end{equation}
equals the $L^2_{\mu}(X)$-limit $\lim_{N\to\infty}\frac{1}{N}\sum_{n=1}^Nf(T^{n}x)g(S^{n}x)$.
\end{theorem}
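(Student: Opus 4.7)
The plan is to show the $L^2_\mu$-difference
\[
\frac{1}{N}\sum_{n=1}^{N} f(T^{\lfloor h(n)\rfloor}x)g(S^{\lfloor h(n)\rfloor}x)-\frac{1}{H_N}\sum_{m=1}^{H_N} f(T^m x)g(S^m x)\longrightarrow 0
\]
as $N\to\infty$, where $H_N:=\lfloor h(N)\rfloor$. The second average is known to converge in $L^2_\mu$ by the Conze--Lesigne theorem for pairs of commuting measure-preserving transformations (or, more generally, Walsh's norm convergence theorem), so this comparison both identifies the two limits and completes the proof.

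The reduction is carried out by a change of variables. Letting $\varphi:=h^{-1}$, the assumptions on $h\in\mathcal{R}_c$ translate to $\varphi$ being $1/c$-regularly varying with analogous decay of its derivatives. For large $n$ we have $h(n+1)-h(n)>1$, so $\lfloor h(\cdot)\rfloor$ is eventually injective and the indicator of its range admits the identity $\mathbf{1}_{m\in h(\mathbb{N})}=\lfloor\varphi(m+1)\rfloor-\lfloor\varphi(m)\rfloor$. Using $\lfloor t\rfloor=t-\{t\}$ together with a Taylor expansion of $\varphi$ produces
\[
\mathbf{1}_{m\in h(\mathbb{N})}=\varphi'(m)+\bigl(\{\varphi(m)\}-\{\varphi(m+1)\}\bigr)+O(|\varphi''(m)|),
\]
so that the sparse average splits into a smooth principal term $\frac{1}{N}\sum_{m\le H_N}F(m)\varphi'(m)$, an oscillatory error, and a Taylor remainder, where $F(m):=f(T^m x)g(S^m x)$.

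For the principal term, the slow variation of $\varphi'$ together with $\sum_{m\le H_N}\varphi'(m)=N+O(1)$ allows a routine summation-by-parts to transfer $L^2_\mu$-convergence of $\frac{1}{M}\sum_{m\le M}F(m)$ to $L^2_\mu$-convergence of the $\varphi'$-weighted sum, with the same limit; this step draws only on the standard bilinear ergodic theorem. The Taylor remainder is absorbed by the decay $\varphi''(m)\to 0$, whose rate is controlled by the $c$-regular variation conditions.

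The main obstacle is the oscillatory error. A Fourier expansion of the sawtooth with Vaaler-type smooth truncation reduces it to twisted bilinear averages
\[
\frac{1}{N}\sum_{m\le H_N}f(T^m x)g(S^m x)\,e^{2\pi i k\varphi(m)},\qquad k\in\mathbb{Z}\setminus\{0\},
\]
which must be controlled in $L^2_\mu$ uniformly in $k$ (against a $1/|k|$ Fourier weight). This is where the real work lies: one combines van der Corput / exponent-pair estimates for the smooth phase $k\varphi(m)\sim km^{1/c}$ with Host--Kra box-seminorm control of the bilinear sequence $F(m)$. The constraint $c\in[1,23/22)$ is imposed precisely by this balance: it is the range in which the exponent-pair saving for $\varphi$ overcomes the combinatorial loss from $U^2$-type control of the commuting bilinear target. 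Navigating this balance, uniformly in $k$, is the technical heart of the argument, and it is here that the author's recently developed change-of-variables framework is decisive.
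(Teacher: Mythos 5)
Your change-of-variables framework (pass to the inverse $\varphi=h^{-1}$, split the indicator of the range into a smooth part plus a sawtooth fluctuation, then show the smooth part reproduces the standard bilinear average via a summation-by-parts/Toeplitz argument) is exactly the skeleton of the paper's proof, and the main-term analysis you sketch is sound. The decomposition in the paper is the exact identity $1_{\mathbb{N}_h}(n)=\big(\varphi(n+1)-\varphi(n)\big)+\big(\Phi(-\varphi(n+1))-\Phi(-\varphi(n))\big)$ with $\Phi(x)=\{x\}-1/2$, avoiding your Taylor remainder $O(|\varphi''|)$, but this is a cosmetic difference.

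The genuine gap is in your treatment of the oscillatory error. You propose to control the twisted bilinear averages $\tfrac{1}{N}\sum_{m}f(T^m x)g(S^m x)\,e(k\varphi(m))$, uniformly in $k$, by pairing van der Corput / exponent-pair estimates on the phase with ``Host--Kra box-seminorm control of the bilinear sequence $F(m)$.'' This is not what the paper does and, as stated, it is not a workable route: for two commuting transformations there is no finitary, quantitative inverse theorem for the relevant Host--Kra box seminorms, so ``seminorm control of $F$'' does not yield the power saving $N^{-\chi}$ you need. The paper sidesteps dynamics entirely. It applies Calder\'on's transference principle to convert the required estimate into a combinatorial inequality over $\mathbb{Z}^2$ (Proposition~\ref{L1controlEN}), then proves the purely arithmetic bound $\big|\mathbb{E}_{x\in[\pm2N]^2,\,n\in[N]}f_0(x)f_1(x+ne_1)f_2(x+ne_2)\,w_N(n)\big|\lesssim N^{-1/2}\|w_N\|_{U^3}$ (Lemma~\ref{U3forFRA}) for \emph{arbitrary} $1$-bounded $f_0,f_1,f_2$ via three applications of Cauchy--Schwarz/van der Corput; crucially, all the Gowers-norm analysis lands on the deterministic weight $w_N$, not on the dynamical data. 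The exponential-sum estimates (truncated Fourier expansion of $\Phi$, dyadic decomposition of $w_N^{\mathrm{main}}$, van der Corput for the phases $m\varphi(\cdot)$) then bound $\|w_N^{\mathrm{main}}\|_{U^3}$ by $N^{1/2-\chi}$ and force the constraint $c<23/22$ (Lemma~\ref{errorclean}). Incidentally the relevant control is a $U^3$-bound, not the ``$U^2$-type control'' you mention. Without transferring the Gowers-norm burden from the dynamical functions to the weight, your argument does not close, and this reallocation is precisely the technical mechanism that lets the method handle commuting transformations at all.
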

Our approach for proving Theorem~$\ref{FRA}$ involves certain quantitative estimates for operators on functions on the integers, which, readily give rise to certain results in additive combinatorics. In the sequel, we establish the following Roth-type result for subsets of the natural numbers lacking patterns of the form 
\[
n,n+\lfloor  h(m)\rfloor,n+2\lfloor h(m)\rfloor\text{,}\quad n,m\in\mathbb{N}\text{.}
\]
Notably, our proof is not only quantitative, but also provides an estimate of the same strength as the state of the art bound for the conventional Roth's theorem \cite{BloomSharp}, see Theorem~$\ref{SOTAest}$ and Remark~$\ref{RemarkRoth}$.
\begin{theorem}\label{RothFra}
Assume $c\in[1,23/22)$ and $h\in\mathcal{R}_c$. Then there exist positive constants $\chi=\chi_h$ and $C=C_h$ such that for all $N\in\mathbb{N}$ and $A\subseteq \{1\dotsc,N\}$ lacking patterns of the form $n,n+\lfloor h(m)\rfloor,n+2\lfloor h(m)\rfloor$, $n,m\in\mathbb{N}$ we have
\begin{equation}\label{RothBound}
\frac{|A|}{N}\le C_h\exp\big(-\chi_h(\log N)^{1/9}\big)\text{.}
\end{equation}
\end{theorem}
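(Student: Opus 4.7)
The plan is to reduce the sparse Roth-type counting problem to the classical one via a Fourier-analytic change of variables and then to apply Bloom's sharp quantitative Roth theorem essentially as a black box. Setting $f = \mathbf{1}_A$ and letting $M$ be defined by $h(M) \asymp N$, we consider the pattern-counting quantity
\[
\Lambda_h(A) := \sum_{m=1}^M \sum_{n} f(n)\, f(n + \lfloor h(m)\rfloor)\, f(n + 2\lfloor h(m)\rfloor),
\]
which vanishes under the pattern-free hypothesis. The first step would be to expand $f$ in a Fourier series on $\mathbb{Z}/P\mathbb{Z}$ for an auxiliary prime $P \asymp N$ and sum out the linear constraint, writing $\Lambda_h(A)$ as a trilinear form in $\widehat{f}$ weighted by the sparse exponential sum $\Phi_h(\theta) := \sum_{m=1}^M e(\theta \lfloor h(m)\rfloor)$ evaluated along a line of dual frequencies.

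The core ingredient would be a quantitative comparison of $\Phi_h$ with its smoothly weighted integer counterpart. Implementing the author's change of variables technique---the observation that $\lfloor h(m)\rfloor = k$ precisely when $m \in [\phi(k), \phi(k+1))$ for $\phi = h^{-1}$---we would arrive at the approximation
\[
\Phi_h(\theta) = \sum_{k \lesssim N} \phi'(k)\, e(\theta k) + E_h(\theta),
\]
in which $\phi'(k)$ encodes the smooth density of $\{\lfloor h(m)\rfloor\}_{m \le M}$ near $k$. On the major arcs (frequencies close to rationals with small denominator) the smooth sum would be straightforward to analyze and, reinserted into the trilinear form, would produce a weighted version of the classical Roth counting operator on $A$; because $h \in \mathcal{R}_c$, the weight $\phi'$ is slowly varying on dyadic scales and therefore does not interfere with a Bloom-type analysis. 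On the minor arcs, genuine cancellation would be required in both the smooth sum and the error $E_h$, which we would obtain from van der Corput-type derivative estimates for the phase $\theta\, h(x)$, whose derivatives satisfy $h^{(j)}(x) \asymp x^{c-j}$ up to slowly varying corrections by virtue of the $c$-regularly varying hypothesis. The restriction $c < 23/22$ is precisely what the relevant exponent pair in this analysis demands.

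The main technical obstacle will be to establish a minor-arc estimate of the shape
\[
|\Phi_h(\theta)| \lesssim M (\log N)^{-A}
\]
uniformly in $\theta$ for every fixed $A > 0$, by iterating van der Corput's $A$- and $B$-processes on the exponential sum with phase $\theta\, h(\cdot)$. Plancherel together with the trivial $\ell^\infty$ bound on $\widehat{f}$ would then convert this into a negligible minor-arc contribution to $\Lambda_h(A)$. On the major arcs, the weighted version of the standard Roth count would be handled by Bloom's sharp theorem: it is strictly positive as soon as $|A|/N > C_h \exp(-\chi_h (\log N)^{1/9})$, and this strict positivity would dominate the minor-arc error to yield $\Lambda_h(A) > 0$, contradicting the pattern-free assumption. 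Attaining the sharp exponent $1/9$ requires the minor-arc saving to be a full logarithmic power rather than just a power of $N^{-\delta}$, which is precisely why the fine derivative estimates available for $h \in \mathcal{R}_c$ are needed in place of a bare Weyl bound.
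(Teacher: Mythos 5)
Your proposal takes a genuinely different route from the paper's proof, but it contains a significant structural gap and several imprecisions that would prevent it from closing.

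The paper does not run a circle-method analysis directly on the sparse count. Instead, it uses the change-of-variables decomposition of $1_{\mathbb{N}_h}$ to rewrite the sum over $\mathbb{N}_h\cap[N]$ as a nonnegative combination of the standard Ces\`aro averages $\mathbb{E}_{m\in[n]}f^A_m$ plus an error term, establishing that
\[
\mathbb{E}_{x,n\in[N]}1_A(x)1_A(x+n)1_A(x+2n)=O(N^{-\chi})
\]
from the pattern-free hypothesis. The nonnegativity of the correction term $P_{h,A}(N)$, which comes from the positivity of $f^A_n$ together with the convexity of $-\varphi$, is what lets one drop it and isolate the conventional count. Your Fourier-analytic reformulation does not make this positivity visible and it is unclear how you would recover it from a major-arc contribution.

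The more serious gap is the missing supersaturation step. You plan to ``apply Bloom's sharp theorem as a black box,'' but Bloom--Sisask gives only a density threshold for the \emph{existence} of a single nontrivial 3-AP. To turn the upper bound $\mathbb{E}_{x,n\in[N]}1_A(x)1_A(x+n)1_A(x+2n)\le C N^{-\chi}$ into the density bound $|A|/N\le C_h\exp(-\chi_h(\log N)^{1/9})$ one needs a \emph{quantitative} lower bound on the 3-AP count in terms of $\alpha_3$, i.e.\ supersaturation. The paper invokes exactly this (Theorem~\ref{FancyRoth}, due to Rimani\'c--Wolf) after embedding $A$ into $\mathbb{Z}_{2N+1}$; without that ingredient your contradiction argument does not get off the ground, because knowing that $A$ has density above the Kelley--Meka threshold only tells you $\Lambda(A)>0$, with no control on its size relative to the minor-arc error.

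Finally, the quantitative targets you set are miscalibrated. The claimed minor-arc bound $|\Phi_h(\theta)|\lesssim M(\log N)^{-A}$ cannot hold ``uniformly in $\theta$'' (it fails on major arcs), and even on minor arcs a logarithmic savings is far too weak: since $\exp(-\chi(\log N)^{1/9})=o((\log N)^{-A})$ for every fixed $A$, a logarithmic error would swamp the main term at the relevant density scale. The paper's error estimate via $U^3$-control of the sawtooth kernel (Proposition~\ref{KeyProp3}) delivers a genuine \emph{power} savings $N^{-\chi}$, which is what makes the combination with supersaturation and the $(\log N)^{1/9}$ exponent possible. Your closing remark that one needs a ``full logarithmic power rather than just a power of $N^{-\delta}$'' inverts the actual hierarchy of strengths and would need to be corrected.
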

This immediately implies that if $A\subseteq \mathbb{N}$ has positive upper Banach density, then it contains infinitely many patterns of the form $n,n+\lfloor h(m)\rfloor,n+2\lfloor h(m)\rfloor$. The bounds above are strong enough to yield the corresponding Roth-type theorem for fractional powers in the primes, providing a partial answer to Problem~30 in \cite{OpProb}.
\begin{corollary}
\label{RothFraPrimes}
Fix $c\in[1,23/22)$, $h\in\mathcal{R}_c$ and assume $A\subseteq \mathbb{P}$ has positive upper relative density,
\[
\text{i.e. }\limsup_{N\to\infty}\frac{|A\cap[1,N]|}{|\mathbb{P}\cap[1,N]|}>0\text{.}
\]
Then $A$ contains infinitely many patterns of the form $n,n+\lfloor h(m)\rfloor,n+2\lfloor h(m)\rfloor$, $n,m\in\mathbb{N}$.
\end{corollary}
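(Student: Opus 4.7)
The plan is to derive this corollary directly from Theorem~\ref{RothFra} via the prime number theorem, exploiting the fact that the bound \eqref{RothBound} decays faster than any negative power of $\log N$. No transference principle or pseudorandom majorant is needed, since the density of the primes in $[1,N]$ beats the right-hand side of \eqref{RothBound} for large $N$.

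First, I would argue by contradiction: suppose $A \subseteq \mathbb{P}$ has positive upper relative density $\delta > 0$ in the primes, yet contains only finitely many patterns of the form $n, n+\lfloor h(m)\rfloor, n+2\lfloor h(m)\rfloor$ with $n,m \in \mathbb{N}$. Let $F$ denote the finite set of natural numbers appearing as a coordinate of any such pattern, and set $A' := A \setminus F$. Then $A'$ still has upper relative density $\delta$ in the primes (since $F$ is finite), and $A'$ contains no pattern of the above form at all. Choose an infinite sequence $N_k \to \infty$ along which
\[
|A' \cap [1,N_k]| \ \ge\ \frac{\delta}{2}\,|\mathbb{P} \cap [1,N_k]|.
\]

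Next, I would apply Theorem~\ref{RothFra} to $A' \cap [1,N_k] \subseteq \{1,\dotsc,N_k\}$, which yields
\[
\frac{|A' \cap [1,N_k]|}{N_k} \ \le\ C_h \exp\bigl(-\chi_h (\log N_k)^{1/9}\bigr).
\]
On the other hand, the prime number theorem gives $|\mathbb{P} \cap [1,N_k]| \gtrsim N_k/\log N_k$, so combining the previous displays yields
\[
\frac{\delta}{2\log N_k} \ \lesssim\ C_h \exp\bigl(-\chi_h (\log N_k)^{1/9}\bigr).
\]
Setting $L_k = \log N_k$, this says $L_k^{-1} \lesssim \exp(-\chi_h L_k^{1/9})$, which fails for all sufficiently large $L_k$ since $L_k^{1/9}/\log L_k \to \infty$. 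This contradiction proves the corollary.

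The only thing to be careful about is the passage from ``positive upper relative density'' to the sequence $N_k$, but this is immediate from the definition of $\limsup$. The substantive content of the argument is entirely carried by the quantitative strength of \eqref{RothBound}; any bound of the form $o(1/\log N)$ on the density of pattern-free subsets would suffice, so the stretched-exponential decay we have is far more than enough. I do not anticipate any genuine obstacle here — the proof is essentially a one-line deduction from Theorem~\ref{RothFra} and the PNT, and is in that respect analogous to how Bloom's sharp bound for Roth's theorem implies the corresponding statement for the primes.
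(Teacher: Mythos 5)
Your proof is correct and takes essentially the same approach as the paper: argue by contradiction, reduce from ``infinitely many'' to ``none'' by discarding a finite set, apply Theorem~\ref{RothFra}, and note that the prime number theorem forces the relative density to decay to zero, contradicting the positive upper relative density hypothesis. The paper compresses the finite-set removal into the single remark that it suffices to find one pattern, but the mechanism and the key quantitative input (that $\exp(-\chi(\log N)^{1/9})$ beats $1/\log N$) are identical to yours.
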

In a very recent breakthrough \cite{cornerpaper} quasipolynomial bounds are established for corner-free subsets of $\{1,\dotsc,N\}^2$. In the same  spirit as Theorem~$\ref{RothFra}$, we establish the fractional-power variant of the result for corners and give bounds of the same ``shape''. More precisely we obtain the following.
\begin{theorem}\label{corners}
Assume $c\in[1,23,22)$ and $h\in\mathcal{R}_c$. Then there exist positive constants $\chi=\chi_h$ and $C=C_h$ such that for all $N\in\mathbb{N}$ and $A\subseteq \{1\dotsc,N\}^2$ lacking patterns of the form 
\begin{equation}\label{cornerconfig}
(n_1,n_2),(n_1+\lfloor h(m)\rfloor,n_2),(n_1,n_2+\lfloor h(m)\rfloor)\text{,}\quad\text{ $n_1,n_2,m\in\mathbb{N}$,}
\end{equation}
we have
\begin{equation}\label{CornerBound}
\frac{|A|}{N^2}\le C_h\exp\big(-\chi_h(\log N)^{1/600}\big)\text{.}
\end{equation}
\end{theorem}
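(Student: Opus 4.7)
The plan is to transfer the sparse corner-counting problem along $\lfloor h(m)\rfloor$ into the conventional corner-counting problem and then invoke the recent quasipolynomial bound of \cite{cornerpaper}. Let $A\subseteq\{1,\dots,N\}^2$ lack the configuration \eqref{cornerconfig} and set $\alpha=|A|/N^2$. Fix a scale $M=M(N)$ with $Y:=\lfloor h(M)\rfloor \le N^{\eta}$ for a small parameter $\eta>0$ to be optimized, and introduce the sparse trilinear form
\[
\Lambda_h(f_0,f_1,f_2):=\frac{1}{MN^2}\sum_{m=1}^{M}\sum_{(n_1,n_2)\in[N]^2} f_0(n_1,n_2)\,f_1(n_1+\lfloor h(m)\rfloor,n_2)\,f_2(n_1,n_2+\lfloor h(m)\rfloor),
\]
together with its conventional analogue $\Lambda_Y$ obtained by averaging $y$ uniformly over $[Y]$. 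Since $\lfloor h(m)\rfloor\ge 1$ for $m$ beyond the fixed $x_0$, the hypothesis on $A$ forces $\Lambda_h(\ind{A},\ind{A},\ind{A})\le O(1/M)$.

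The core step is a quantitative transference inequality of the form
\[
\bigl|\Lambda_h(f_0,f_1,f_2)-\Lambda_Y(f_0,f_1,f_2)\bigr|\le \mathcal{E}(N)\cdot\min_{j\in\{0,1,2\}}\|f_j\|_{\square},
\]
where $\|\cdot\|_\square$ is the Gowers box $U^2$-seminorm on $[N]^2$ (the correct seminorm controlling trilinear corner averages) and $\mathcal{E}(N)$ is a quantitative saving. I would establish it via the author's change-of-variables technique: use summation by parts together with the regularity of $h\in\mathcal{R}_c$ to pass from the sparse sum $\sum_m F(\lfloor h(m)\rfloor)$ to $\sum_y F(y)/h'(h^{-1}(y))$, then perform a major-arc/minor-arc decomposition in the joint Fourier expansion. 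On minor arcs one applies Van der Corput and exponent-pair estimates for $\sum_m e(\xi h(m))$ --- it is precisely here that the arithmetic range $c<23/22$ is used --- to extract a power-saving in $N$ that overrides the box-norm factor; on major arcs one matches $\Lambda_Y$ up to a comparable error.

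With the transference in place, the quasipolynomial corner theorem of \cite{cornerpaper} applies directly to $\Lambda_Y(\ind{A},\ind{A},\ind{A})$: if $\alpha\ge C\exp(-c(\log N)^{1/600})$, then $\Lambda_Y(\ind{A},\ind{A},\ind{A})\gg \alpha^{O(1)}$, which through the transference produces $\gg \alpha^{O(1)}MN^2$ genuine sparse corners in $A$, contradicting the hypothesis. One then optimizes $\eta$ so that the transference error $\mathcal{E}(N)$ is absorbed into the same $\exp(-\chi(\log N)^{1/600})$ shape, yielding \eqref{CornerBound}. The hard part will be the transference step itself: first, the exponent-pair savings for $\sum_m e(\xi h(m))$ deteriorate as $c\to 23/22$, forcing the window for $M$ to be narrow and requiring careful interpolation; second, and more subtly, the box-norm control in the corner problem couples the two ambient variables $(n_1,n_2)$ with the sparse parameter $m$ in a non-separable manner, so the exponential sum estimate in $m$ must be interlaced with a two-dimensional Fourier analysis on $[N]^2$. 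Producing a saving strong enough to survive the quasipolynomial corner bound --- rather than a weaker one that would erode the exponent $1/600$ --- is precisely what the author's change-of-variables technique is designed to deliver.
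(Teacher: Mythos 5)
Your high-level plan — pass from sparse corners along $\lfloor h(m)\rfloor$ to conventional corners and then invoke the quasipolynomial corners theorem of \cite{cornerpaper} via a supersaturation statement — is the right target and matches the paper's overall architecture. However, the machinery you describe for the transference step diverges substantially from what the paper does, and as written it has gaps that would be hard to close.

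The paper's ``change of variables'' is not a density replacement $\sum_m F(\lfloor h(m)\rfloor)\approx\sum_y F(y)/h'(h^{-1}(y))$ followed by a major/minor-arc analysis of $\sum_m e(\xi h(m))$. It is the exact identity $1_{\mathbb{N}_h}(n)=\lfloor-\varphi(n)\rfloor-\lfloor-\varphi(n+1)\rfloor$, which decomposes the sparse average over $n\in\mathbb{N}_h\cap[N]$ into a smooth main term (with weight $\varphi(n+1)-\varphi(n)$) and an oscillatory error term with weight $w_N(n)\propto\Phi(-\varphi(n+1))-\Phi(-\varphi(n))$, where $\Phi$ is the sawtooth. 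The Fourier analysis enters only through truncating the Fourier series of $\Phi$; the core quantitative input (Lemma~\ref{errorclean}) is a bound $\|w_N^{\text{main}}\|_{U^3}\lesssim N^{1/2-\chi}$ for the one-dimensional kernel $w_N$ in the variable $n$, proved via the $U^2$ inverse theorem applied to the dyadic pieces of $w_N$ plus an exponential sum bound from \cite{NcminusNc}. This is combined with the corner-specific control estimate (Lemma~\ref{U3forFRA}): $|\mathbb{E}_{x,n}f_0(x)f_1(x+ne_1)f_2(x+ne_2)w_N(n)|\lesssim N^{-1/2}\|w_N\|_{U^3}$. Crucially, the error is bounded by the $U^3$-norm of the \emph{weight in $n$}, not by a box-seminorm of the $f_j$ on $[N]^2$. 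Your proposed inequality $|\Lambda_h-\Lambda_Y|\le\mathcal{E}(N)\min_j\|f_j\|_\square$ goes in the wrong direction: $\|\ind{A}\|_\square$ is not small for dense $A$, so you are asking the power-saving factor $\mathcal{E}(N)$ to do all the work, and nothing in your sketch supplies that saving at the $U^3$ complexity level required by corners (exponent-pair savings for a linear exponential sum in $m$ are a $U^2$-type tool and do not in general propagate to $U^3$ control). Moreover, the truncation to $Y=\lfloor h(M)\rfloor\le N^\eta$ creates a scale mismatch the paper avoids: after the floor-identity decomposition one keeps $n$ ranging over the full $[N]$, and the supersaturation Theorem~\ref{Sat} is applied directly to $\mathbb{E}_{x\in[N]^2,n\in[N]}\ind{A}(x)\ind{A}(x+ne_1)\ind{A}(x+ne_2)$; a $[Y]$-restricted corner count would require a multiscale supersaturation statement you have not provided.

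Two further points. First, your assertion that density $\alpha\ge C\exp(-c(\log N)^{1/600})$ implies $\Lambda_Y(\ind{A},\ind{A},\ind{A})\gg\alpha^{O(1)}$ is false; the corner count is only bounded below by $\exp(-C'\log(1/\alpha)^{600})$ (Theorem~\ref{Sat}), which is quasipolynomial in $\alpha$, not polynomial. The argument still closes with the correct bound (the error term $N^{-\chi}$ dominates either way), but the statement as written is incorrect. Second, in the paper's (non-contradiction) route, one also needs to observe that the second piece of the main term, $P_{h,A}(N)$, is nonnegative so it can be discarded; you sidestep this by running a contradiction, which is fine, but you should be aware the main term does not collapse to a single $\mathbb{E}_{x,n\in[N]}$ average without invoking monotonicity of $\varphi$.
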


The proofs of Theorem~$\ref{FRA}$, Theorem~$\ref{RothFra}$ and Theorem~$\ref{corners}$ are very similar in spirit, following the approach from \cite{NcminusNc}. Loosely speaking, a sophisticated ``change of variables'' will be performed, allowing one to pass from summation over the thin set $\mathbb{N}_h\cap[1,N]\coloneqq \{\lfloor h(k)\rfloor:\,k\in\mathbb{N}\}\cap[1,N]$ to summation over $\mathbb{N}\cap[1,N]$. This change of variables can be carried out whenever the averaging operators involved can be appropriately bounded by $U^3$-norms. More precisely, one may employ the strategy we lay out in the present work whenever the corresponding averaging operators, when weighted, can be suitably bounded by the $U^3$-norm of the weight, see for example Lemma~$\ref{U3forFRA}$ in our section~3, or Lemma~3.3 in \cite{NcminusNc}. For more comments on the method we refer the reader to subsection~$\ref{strategy}$. This approach can for example yield conditional results on the $\mu$-a.e. convergence of the ergodic averages $\eqref{nonconvaveasy}$ provided that $\lim_{N\to\infty}\frac{1}{N}\sum_{n=1}^Nf(T^{n}x)g(S^{n}x)$ converges for $\mu$-a.e. $x\in X$. Although we do not give the most general results derivable by our methods, we note that novel results may be established even for single averages. Specifically, we prove the following, which to the best of the author's knowledge, is also new.   
\begin{theorem}\label{n2pt} Assume $c\in[1,23/22)$ and $h\in\mathcal{R}_c$. Let $(X,\mathcal{B},\mu)$ be a probability space and $T$ a measure-preserving transformation on $X$. Then for every $f\in L_{\mu}^{\infty}(X)$ we have
\begin{equation}\label{ptconv}
\lim_{N\to\infty}\frac{1}{N}\sum_{n=1}^Nf(T^{\lfloor h(n)\rfloor^2} x)=\lim_{N\to\infty}\frac{1}{N}\sum_{n=1}^Nf(T^{n^2} x)\quad\text{for $\mu$-a.e. $x\in X$.}
\end{equation}
\end{theorem}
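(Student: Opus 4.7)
The plan is to combine Bourgain's pointwise ergodic theorem for squares (which supplies the a.e.\ existence of the right-hand-side limit) with the change-of-variables scheme at the core of this paper. By Bourgain's theorem, $g_f(x):=\lim_N \tfrac{1}{N}\sum_{n=1}^N f(T^{n^2}x)$ exists $\mu$-a.e.\ for every $f\in L^\infty_\mu(X)$, so it is enough to prove that the left-hand side of \eqref{ptconv} converges $\mu$-a.e.\ to the same $g_f$.

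Setting $M_N:=\lfloor h(N)\rfloor$ and $w(m):=\ind{m\in\mathbb{N}_h}$, and using that $\lfloor h\rfloor$ is eventually strictly increasing, I rewrite
\[
\frac{1}{N}\sum_{n=1}^N f(T^{\lfloor h(n)\rfloor^2}x)\;=\;\frac{1}{N}\sum_{m=1}^{M_N} w(m)\,f(T^{m^2}x).
\]
Then I decompose the rough weight as $w(m)=\rho(m)+D(m)$, where $\rho(m):=1/h'(h^{-1}(m))$ is the smooth density of $\mathbb{N}_h$ and the discrepancy satisfies $\sum_{j\le m}D(j)=O(1)$ by the equidistribution estimates for $h\in\mathcal{R}_c$ that power the $U^3$-machinery of the paper.

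For the smooth piece, Abel summation against $S_m(f):=\sum_{k\le m}f(T^{k^2}x)$, together with $S_m(f)/m\to g_f$ a.e., the slow variation of $\rho$ dictated by the $\mathcal{R}_c$ axioms, and the identity $\int_1^{M_N}\rho(t)\,dt\sim N$, yields via a pointwise Stolz--Cesàro step
\[
\frac{1}{N}\sum_{m=1}^{M_N}\rho(m)f(T^{m^2}x)\longrightarrow g_f(x)\quad\mu\text{-a.e.}
\]
To handle the discrepancy piece $\mathcal{E}_N f(x):=\frac{1}{N}\sum_{m=1}^{M_N} D(m)f(T^{m^2}x)$, my plan is to first extract the $L^2_\mu$-decay $\|\mathcal{E}_N f\|_2=o_{N\to\infty}(1)\|f\|_2$ from the van der Corput / $U^3$-norm apparatus of the paper; the weight $D$ encodes precisely the equidistribution of $\{h(n)\}\bmod 1$ needed for this estimate. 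Then, transferring to $\mathbb{Z}$ via Calder\'on, dyadically decomposing $m\in[2^k,2^{k+1})$ and invoking Bourgain-type maximal and oscillation inequalities for square averages should give a pointwise majorization of $\sup_N|\mathcal{E}_N f|$ by a standard maximal function; the Banach principle combined with convergence on a dense class then upgrades the $L^2$ smallness to $\mu$-a.e.\ vanishing, as desired.

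The principal obstacle is this last step. The $L^2$ smallness follows rather cleanly from the paper's $U^3$-machinery, but propagating it to a pointwise supremum requires a careful dyadic-scale analysis in which the equidistribution gain carried by $D$ has to survive the passage through Bourgain's square maximal function. This interaction between the fine-scale oscillation of $T^{m^2}$ and the arithmetic regularity of $\mathbb{N}_h$ is where I expect the bulk of the technical work to concentrate.
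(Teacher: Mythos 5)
Your opening matches the paper's: Bourgain's theorem for squares supplies the a.e.\ limit $g_f$; the change of variables and the split of $\ind{\mathbb{N}_h}$ into a smooth main weight plus an oscillatory error, with Abel/Stolz--Ces\`aro summation for the smooth piece, is exactly Proposition~\ref{MaintermExtraction} applied on the trivial probability space $\{x_0\}$ for each $x_0$ in the Bourgain full-measure set (the paper's main weight is $\varphi(n+1)-\varphi(n)$ with $\varphi=h^{-1}$, asymptotically your $\rho$). The genuine gap is in your error term. You propose to prove $\|\mathcal{E}_N f\|_{L^2_\mu}=o(1)$ and then to reach a.e.\ convergence by building a maximal inequality for $\sup_N|\mathcal{E}_N f|$ from Bourgain's square maximal/oscillation inequalities together with the Banach principle. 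As stated this is both underspecified and unlikely to work: Bourgain's maximal function controls \emph{unweighted} square averages, and a pointwise majorization of $\sup_N|\mathcal{E}_N f|$ by it would have to discard the cancellation carried by $D$, which is precisely the content of the estimate; you flag this yourself as ``the principal obstacle.'' A bare qualitative $o(1)$ decay would also be too weak for what is actually needed.

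The paper sidesteps this entirely with a lighter argument that renders the maximal-function machinery unnecessary: Proposition~\ref{L1controlEN} gives \emph{polynomial} decay $\|E_N((f\circ T^{n^2})_n)\|_{L^1_\mu}\lesssim N^{-\chi}$, so for any fixed $\lambda\in(1,2]$ one has $\sum_{k}\|E_{\lfloor\lambda^k\rfloor}\|_{L^1_\mu}\lesssim\sum_k\lambda^{-\chi k}<\infty$, hence $\sum_k|E_{\lfloor\lambda^k\rfloor}(x)|<\infty$ for $\mu$-a.e.\ $x$, hence $E_{\lfloor\lambda^k\rfloor}(x)\to 0$ a.e.\ along the lacunary scales. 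Letting $\lambda$ range over a countable dense set in $(1,2]$ and interpolating between adjacent lacunary scales (using the slow variation of $|\mathbb{N}_h\cap[N]|$) then upgrades this to full a.e.\ convergence. No maximal inequality, no oscillation inequality, no Banach principle --- the quantitative power-saving in the $L^1$ bound does all the work. Replacing your final paragraph with this lacunary summability step closes the proof.
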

\subsection{Strategy}\label{strategy}Our treatment is similar to the one from \cite{NcminusNc}, see subsection~1.1. Here we wish to present a slightly more general framework allowing us to unify our approach to all of our main theorems. Let $(X,\mathcal{B},\mu)$ be a probability space and let $\mathfrak{f}=(f_n)_{n\in\mathbb{N}}$ be a sequence of $1$-bounded functions on $X$. We consider
\[
B_N\mathfrak{f}(x)\coloneqq\frac{1}{|\mathbb{N}_h\cap [N]|}\sum_{n\in[N]}1_{\mathbb{N}_h}(n)f_n(x)\text{,}
\] 
where $[N]\coloneqq \{1,\dotsc,N\}$ and $\mathbb{N}_h\coloneqq\{\lfloor h(n)\rfloor:\,n\in\mathbb{N}\}$. After exploiting the following formula $1_{\mathbb{N}_h}(n)=\lfloor -\varphi(n)\rfloor-\lfloor -\varphi(n+1)\rfloor$, where $\varphi$ is the compositional inverse of $h$, we may write
\[
1_{\mathbb{N}_h}(n)=\big(\varphi(n+1)-\varphi(n)\big)+\big(\Phi(-\varphi(n+1))-\Phi(-\varphi(n))\big)\text{,}
\]
where $\Phi(x)=\{x\}-1/2$. This decomposes our averaging operators to $B_N\mathfrak{f}=M_N \mathfrak{f}+E_N\mathfrak{f}$, where 
\begin{equation}\label{MN}
M_N\mathfrak{f}(x)\coloneqq \frac{1}{|\mathbb{N}_h\cap [N]|}\sum_{n\in[N]}\big(\varphi(n+1)-\varphi(n)\big)f_n(x)\text{,}
\end{equation}
and
\begin{equation}\label{EN}
E_N\mathfrak{f}(x)\coloneqq\frac{1}{|\mathbb{N}_h\cap [N]|}\sum_{n\in[N]}\big(\Phi(-\varphi(n+1))-\Phi(-\varphi(n))\big)f_n(x)=\mathbb{E}_{n\in[N]}f_n(x)w_N(n)\text{,}
\end{equation}
with $w_{N}(n)\coloneqq\frac{N(\Phi(-\varphi(n+1))-\Phi(-\varphi(n)))}{|\mathbb{N}_h\cap [N]|}$. The weights in the averaging operator $M_N$ are appropriately well-behaving, making $M_N$ comparable to 
\begin{equation}
A_N\mathfrak{f}(x)\coloneqq\frac{1}{N}\sum_{n=1}^Nf_n(x)\text{.}
\end{equation}
Section~$\ref{MTE}$ is dedicated to performing this decomposition and establishing that $M_N$, indeed, exhibits the same behavior as $A_N$.

A more delicate approach allows us to treat the error term in the context of the theorems proposed here. Through an instance of Calder\'on's transference principle and standard considerations, we will reduce the task of asserting that our error terms are negligible in the context of both ergodic theorems to establishing
\begin{equation}\label{U31}
\big|\mathbb{E}_{x\in[-2N,2N]^2,n\in[N]}f_0(x)f_1(x+ne_1)f_2(x+ne_2)w_N(n)\big|\le C N^{-\chi}\text{,}\end{equation}
where $e_1=(1,0)$ and $e_2=(0,1)$, and
\begin{equation}\label{U32}
\big|\mathbb{E}_{x\in[-2N^2,2N^2],n\in[N]}g_0(x)g_1(x+n^2)w_N(n)\big|\le C N^{-\chi}\text{,}
\end{equation}
for some $\chi>0$, and any $1$-bounded functions $f_0,f_1,f_2$ and $g_0,g_1$ supported in $[-2N,2N]^2$ and $[-2N^2,2N^2]$ respectively. To establish the estimates above, we begin by approximating the sawtooth function $\Phi$ appearing in the weights $w_N$ by its truncated Fourier series, which naturally decomposes our weights to $w_N=w^{\text{main}}_{N}+w^{\text{error}}_{N}$, where $w^{\text{error}}_{N}$ amounts to the contribution of the tail of the Fourier series of $\Phi$. Exploiting the uniform bounds of the tail of the aforementioned expansion, and carefully choosing an appropriate truncation parameter $M(N)$ for the Fourier series we obtain
\[
\mathbb{E}_{n\in[N]}|w^{\text{error}}_N(n)|\lesssim N^{-\chi}\text{,}\quad\text{for some positive $\chi$,}
\]
allowing us to turn our attention $w_N^{\text{main}}$. The only property of the specific patterns we utilize is the fact that both averages in $\eqref{U31}$ and $\eqref{U32}$ are bounded by $\lesssim N^{-1/2}\|w_N^{\text{main}}\|_{U^3}$ and to conclude it would suffice to prove that $\|w_N^{\text{main}}\|_{U^3}\lesssim N^{1/2-\chi}$, for some $\chi>0$. Due to technical complications, we work with dyadic variants of $w_N^{\text{main}}$, and after proving suitable $U^3$-norm estimates for these dyadic pieces, see Lemma~$\ref{errorclean}$, we can conclude. The estimates $\eqref{U31}$ and $\eqref{U32}$ are proven in section~$\ref{GNB}$, while how to use such results to establish the corresponding ergodic theorems is explained in section~$\ref{CTP}$. 

In section~$\ref{ROTHsection}$ we turn our attention to Theorem~$\ref{RothFra}$ and Theorem~$\ref{corners}$. For the former, we note that the key estimate 
\[
\big|\mathbb{E}_{x\in[-2N,2N],n\in[N]}f_0(x)f_1(x+n)f_2(x+2n)w_N(n)\big|\le C N^{-\chi}\text{,}
\]
which can be established as described above and  has essentially already appeared in \cite{NcminusNc}, allows us to conclude that if a set $A\subseteq [N]$ lacks patterns of the form $n,n+\lfloor h(m)\rfloor,n+2\lfloor h(m)\rfloor$, $n,m\in\mathbb{N}$, then
\begin{equation}
\mathbb{E}_{x,n\in[N]}1_A(x)1_A(x+n)1_A(x+2n)=O(N^{-\chi})\text{.}
\end{equation}
By standard considerations, see Theorem~$\ref{FancyRoth}$, and exploiting the state of the art bounds for Roth's theorem, see Theorem~$\ref{SOTAest}$, we are able to conclude that $|A|\lesssim N\exp(-\chi' (\log N)^{1/9})$, completing the proof. The proof of Theorem~$\ref{corners}$ is similar and we provide a quick proof-sketch in the end of the final section.
\subsection{Notation}For every $x\in\mathbb{R}$ we use the standard notation
\[
\lfloor x\rfloor=\max\{n\in\mathbb{Z}:\,n\le x\}\text{,}\quad\{x\}=x-\lfloor x\rfloor\text{,} \quad\|x\|=\min\{|x-n|:\,n\in\mathbb{Z}\}\text{,}
\]
and for every $N\in[1,\infty)$ we let 
\[
[N]\coloneqq[1,N]\cap\mathbb{Z}\text{,}\quad[\pm N]\coloneqq[-N,N]\cap\mathbb{Z}\text{,}\quad\mathbb{N}_{\ge N}\coloneqq\{n\in\mathbb{N}:\,n\ge N\}\text{.}
\]

If $A,B$ are two nonnegative quantities, we write $A\lesssim B$ or $B \gtrsim A$  to denote that there exists a positive constant $C$, possibly depending on a fixed choice of $h\in\mathcal{R}_c$, such that $A\le C B$. Whenever $A\lesssim B$ and $A\gtrsim B$, we write $A\simeq B$. In general all the implicit constants appearing may depend on such a fixed choice of $h\in\mathcal{R}_c$. We note that $h(x)$ is not defined for $x<x_0$ but we choose to abuse notation; we let $h(x)$ take arbitrary values for $x\in[1,x_0]$ and all our results remain true.

Given a measurable space $(X,\mathcal{B})$, we call a function $f\colon X\to \mathbb{C}$ $1$-bounded if $f$ is measurable and $|f|\le 1$. For every function $f\colon\mathbb{Z}\to\mathbb{C}$ and $h_1\in\mathbb{Z}$ we define the difference function $\Delta_{h_1}f(x)=f(x)\overline{f(x+h_1)}$, and for every $s\in\mathbb{N}$ and $h_1,\dotsc,h_s\in\mathbb{Z}$ we define $\Delta_{h_1,\dotsc,h_s}f(x)=\Delta_{h_1}\dots\Delta_{h_s}f(x)$. For every $s\in\mathbb{N}_{\ge2}$ and every finitely supported $f\colon\mathbb{Z}\to\mathbb{C}$ we define the (unnormalised) Gowers $U^s$-norm by
\[
\|f\|_{U^s}=\bigg(\sum_{x,h_1,\dotsc,h_s\in\mathbb{Z}}\Delta_{h_1,\dotsc,h_s}f(x)\bigg)^{1/2^s}\text{.}
\]
For every $N\in\mathbb{N}$ we define 
\[
\mu_N(n)=\frac{|\{(h_1,h_2)\in[N]:\,h_1-h_2=n\}|}{N^2}\text{,}
\]
and we note that $\mu_N(n)\lesssim N^{-1}1_{[-N,N]}(n)$. 

We denote $e^x$ by $\exp(x)$ and $e^{2\pi i x}$ by $e(x)$, and we say that a function $f$ is supported in a set $A$, if $\supp(f)\subseteq A$.
\section*{Acknowledgments}
The author would like to thank Nikos Frantzikinakis, Borys Kuca and Mariusz Mirek for several insightful discussions.
\section{Main term extraction}\label{MTE}
In this section we perform the first step of the ``change of variables'' procedure. The approach is analogous to the one in \cite{NcminusNc} and the reader is encouraged to compare the present section with section~2 from the aforementioned work. We opted for an abstract formulation here, the reader may think of $f_n$ below as $f \circ T^n\cdot g\circ S^{n}$ or $f\circ T^{n^2}$. 
\begin{proposition}\label{MaintermExtraction}
Fix $c\in[1,2)$, $h\in\mathcal{R}_c$ and let $\varphi$ be its compositional inverse. Assume $(X,\mathcal{B},\mu)$ is a probability space and  $(f_n)_{n\in\mathbb{N}}$ is a sequence of $1$-bounded functions on $X$ such that for some $f\in L^2_{\mu}(X)$  we have
\begin{equation}\label{Czconverge}
\lim_{N\to\infty}\|\mathbb{E}_{n\in[N]}f_n-f\|_{L^2_{\mu}(X)}=0\text{.}
\end{equation}
Then we have that
\begin{equation}\label{FirstReduction}
\limsup_{N\to\infty}\|\mathbb{E}_{n\in\mathbb{N}_h\cap[N]}f_n-f\|_{L^2_{\mu}(X)}\le \limsup_{N\to\infty}\Big\|\sum_{n\in[N]}\frac{\Phi(-\varphi(n+1))-\Phi(-\varphi(n))}{\lfloor \varphi(N)\rfloor}f_n\Big\|_{L^2_{\mu}(X)}\text{,}
\end{equation}
where $\Phi(x)=\{x\}-1/2$.
\end{proposition}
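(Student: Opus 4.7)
The plan is to decompose $1_{\mathbb{N}_h}(n)$ into a smooth main piece carrying weights $\varphi(n+1)-\varphi(n)$ and a fluctuating error piece involving the sawtooth $\Phi$, and then show by an Abelian-type argument that only the error piece can obstruct $L^2$-convergence. Since $h''>0$, the compositional inverse $\varphi$ is concave and $\varphi(n+1)-\varphi(n)\to 0$; for $n$ large enough that this increment is $<1$, the interval $[\varphi(n),\varphi(n+1))$ contains at most one integer, so a direct count gives $1_{\mathbb{N}_h}(n)=\lfloor -\varphi(n)\rfloor-\lfloor -\varphi(n+1)\rfloor$, and substituting $\lfloor y\rfloor=y-\Phi(y)-1/2$ yields
\[
1_{\mathbb{N}_h}(n)=\big(\varphi(n+1)-\varphi(n)\big)+\big(\Phi(-\varphi(n+1))-\Phi(-\varphi(n))\big).
\]

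Summing the identity over $n\in[N]$ and noting that the $\Phi$-part telescopes to an $O(1)$ quantity, I obtain $|\mathbb{N}_h\cap[N]|=\lfloor\varphi(N)\rfloor+O(1)$, so the normalized average splits as
\[
\mathbb{E}_{n\in\mathbb{N}_h\cap[N]}f_n=\frac{1+O(\varphi(N)^{-1})}{\lfloor\varphi(N)\rfloor}\Big(M_N'\mathfrak{f}+E_N'\mathfrak{f}\Big),
\]
where $M_N'\mathfrak{f}=\sum_{n\in[N]}(\varphi(n+1)-\varphi(n))f_n$ and $E_N'\mathfrak{f}=\sum_{n\in[N]}(\Phi(-\varphi(n+1))-\Phi(-\varphi(n)))f_n$. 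Since $\lfloor\varphi(N)\rfloor^{-1}E_N'\mathfrak{f}$ is precisely the object on the right-hand side of \eqref{FirstReduction}, the triangle inequality (together with $\|f_n\|_\infty\le 1$ to absorb the $O(\varphi(N)^{-1})$ factor) reduces the proposition to showing $\lfloor\varphi(N)\rfloor^{-1}M_N'\mathfrak{f}\to f$ in $L^2_\mu(X)$.

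For this Abelian step, set $a_n=\varphi(n+1)-\varphi(n)$; concavity of $\varphi$ gives $a_n>0$ and $a_n-a_{n+1}\ge 0$. With $S_n=\sum_{k=1}^n f_k=n F_n$ and $F_n\to f$ in $L^2_\mu(X)$ by hypothesis, Abel summation yields
\[
M_N'\mathfrak{f}=a_N S_N+\sum_{n=1}^{N-1}(a_n-a_{n+1})S_n,
\]
and the companion telescoping identity $a_N N+\sum_{n=1}^{N-1}(a_n-a_{n+1})n=\sum_{n=1}^N a_n=\varphi(N+1)-\varphi(1)$ shows that replacing every $S_n$ by $nf$ contributes exactly $\varphi(N)f+O(1)\cdot f$. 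The residual is controlled in $L^2_\mu(X)$ by
\[
\frac{a_N N\|F_N-f\|_{L^2_\mu(X)}+\sum_{n=1}^{N-1}(a_n-a_{n+1})\,n\,\|F_n-f\|_{L^2_\mu(X)}}{\lfloor\varphi(N)\rfloor},
\]
and splitting at a threshold $N_0$ past which $\|F_n-f\|_{L^2_\mu(X)}<\varepsilon$, the tail is $O(\varepsilon)$ (using $Na_N\lesssim\varphi(N)$ and $\sum_n(a_n-a_{n+1})n\le\varphi(N)+O(1)$), while the head $n<N_0$ vanishes as $N\to\infty$ because $\varphi(N)\to\infty$.

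The main obstacle is the Abelian transfer above, which needs both the nonnegativity of the second differences $a_n-a_{n+1}$ (from concavity of $\varphi$) and the estimate $Na_N\lesssim\varphi(N)$; both are standard consequences of $h\in\mathcal{R}_c$, and once they are in place, the inequality \eqref{FirstReduction} is immediate from the triangle inequality applied to the decomposition above.
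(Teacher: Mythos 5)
Your proof is correct and reaches the same reduction as the paper, but the intermediate step — showing that $\lfloor\varphi(N)\rfloor^{-1}\sum_{n\in[N]}(\varphi(n+1)-\varphi(n))f_n\to f$ in $L^2_{\mu}(X)$ — is handled by a genuinely cleaner route. The paper performs the same Abel summation, but then treats the boundary term $a_NS_N/\lfloor\varphi(N)\rfloor$ and the difference-weighted sum $\sum_{n<N}n(a_n-a_{n+1})F_n/\lfloor\varphi(N)\rfloor$ separately: the first is shown to converge to $\gamma f$ with $\gamma=1/c$ via the asymptotic $Na_N/\lfloor\varphi(N)\rfloor\to 1/c$, and the second to $(1-\gamma)f$ via a Toeplitz lemma for normed spaces when $c>1$ (whose hypothesis (ii) relies on the asymptotic $\sum_{n<N}n(a_n-a_{n+1})/\lfloor\varphi(N)\rfloor\to 1-\gamma$), with a separate direct bound when $c=1$ since then $1-\gamma=0$ and the Toeplitz normalization degenerates. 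You instead keep the two Abel pieces together and exploit the \emph{exact} telescoping identity $a_NN+\sum_{n<N}n(a_n-a_{n+1})=\sum_{n=1}^N a_n=\varphi(N+1)-\varphi(1)$, so that substituting $S_n=nf+n(F_n-f)$ produces the main contribution $(\varphi(N)+O(1))f$ on the nose, leaving a residual that a single $\varepsilon$-threshold argument kills. This dispenses with the $c=1$ versus $c>1$ case distinction entirely, does not need the explicit value of $\gamma$, and replaces the asymptotic normalization of the paper's Toeplitz coefficients by an algebraic identity. The only ingredients you invoke are the eventual nonnegativity of $a_n-a_{n+1}$ (concavity of $\varphi$, which the paper also uses) and $Na_N\lesssim\varphi(N)$ (again from concavity), both of which are sound. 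One small point worth making explicit in a polished write-up: the nonnegativity of $a_n-a_{n+1}$ may fail for a bounded initial segment of $n$ (as in the paper, where this holds only for $n\gtrsim 1$), so the triangle inequality bound for the residual should use $|a_n-a_{n+1}|$ there; since that segment is finite and independent of $N$, its contribution is still $O(1/\varphi(N))$ and is absorbed into your head estimate.
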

\begin{proof}
The identity $1_{\mathbb{N}_h}(n)=\lfloor -\varphi(n)\rfloor-\lfloor -\varphi(n+1)\rfloor$, which holds for $n\gtrsim 1$, see Lemma~2.1 in \cite{NcminusNc}, yields
\begin{multline}\label{basicfirstsplit}
\mathbb{E}_{n\in\mathbb{N}_h\cap[N]}f_n(x)=\frac{1}{|\mathbb{N}_h\cap [N]|}\sum_{n\in [N]}1_{\mathbb{N}_h}(n)f_n(x)
\\
=\frac{1}{\lfloor \varphi(N)\rfloor}\sum_{n\in [N]}\big(\varphi(n+1)-\varphi(n)\big)f_n(x)+\frac{1}{\lfloor \varphi(N)\rfloor}\sum_{n\in [N]}\big(\Phi(-\varphi(n+1))-\Phi(-\varphi(n))\big)f_n(x)+O\big(\varphi(N)^{-1}\big)\text{,}
\end{multline}
where the implied constant may depend only on $h$. Thus
\begin{multline}\label{mainredjustification}
\limsup_{N\to\infty}\|\mathbb{E}_{n\in\mathbb{N}_h\cap[N]}f_n-f\big\|_{L^2_{\mu}(X)}\le\limsup_{N\to\infty}\bigg\|\frac{1}{\lfloor \varphi(N)\rfloor}\sum_{n\in [N]}\big(\varphi(n+1)-\varphi(n)\big)f_n-f\bigg\|_{L^2_{\mu}(X)}
\\
+\limsup_{N\to\infty}\bigg\|\sum_{n\in [N]}\frac{\Phi(-\varphi(n+1))-\Phi(-\varphi(n))}{\lfloor \varphi(N)\rfloor}f_n\bigg\|_{L^2_{\mu}(X)}\text{.}
\end{multline}
To establish $\eqref{FirstReduction}$ and conclude our proof it suffices to show that
\begin{equation}
\label{SBParts}
\lim_{N\to\infty}\Big\|\frac{1}{\lfloor \varphi(N)\rfloor}\sum_{n\in [N]}\big(\varphi(n+1)-\varphi(n)\big)f_n-f\Big\|_{L^2_{\mu}(X)}=0\text{.}
\end{equation}
Firstly, note that summation by parts yields
\begin{multline}\label{twosummands}
\frac{1}{\lfloor \varphi(N)\rfloor}\sum_{n=1}^N\big(\varphi(n+1)-\varphi(n)\big)f_n
\\
=\frac{N\big(\varphi(N+1)-\varphi(N)\big)}{\lfloor\varphi(N)\rfloor}\mathbb{E}_{n\in[N]}f_n+\sum_{n=1}^{N-1}\frac{n\big((\varphi(n+1)-\varphi(n))-(\varphi(n+2)-\varphi(n+1))\big)}{\lfloor \varphi(N)\rfloor}\mathbb{E}_{m\in[n]}f_m\text{.}
\end{multline}

For the first summand using the basic properties of $\varphi$ and the Mean Value Theorem one straightforwardly obtains
\begin{equation}
\lim_{N\to\infty}\frac{N\big(\varphi(N+1)-\varphi(N)\big)}{\lfloor\varphi(N)\rfloor}=1/c\eqqcolon \gamma\text{,}\quad\text{see 2.14 in \cite{NcminusNc}, page 6,}
\end{equation} 
and by taking into account $\eqref{Czconverge}$ we get
\begin{equation}
\lim_{N\to\infty}\bigg\|\frac{N\big(\varphi(N+1)-\varphi(N)\big)}{\lfloor\varphi(N)\rfloor}\mathbb{E}_{n\in[N]}f_n-\gamma f\bigg\|_{L^2_{\mu}(X)}=0\text{.}
\end{equation}

For the second summand we will use the following simple variant of Toeplitz theorem for normed vector spaces.
\begin{proposition}\label{Toeplitznvs}
For every $N\in\mathbb{N}$ let $(c_{N,k})_{k\in[N]}$ be real numbers. Assume that the following conditions hold:
\begin{itemize}
\item[\normalfont{(i)}] For every $k\in\mathbb{N}$ we have that $\lim_{N\to\infty}c_{N,k}=0$,
\item[\normalfont{(ii)}] $\lim_{N\to\infty}\sum_{k=1}^Nc_{N,k}=1$,
\item[\normalfont{(iii)}] $\sup_{N\in\mathbb{N}}\sum_{k=1}^N|c_{N,k}|<\infty$.
\end{itemize}
Assume $(V,\|\cdot\|)$ is a complex normed vector space and $(v_n)_{n\in\mathbb{N}}$ a sequence on $V$ with $\|v_n\|\le 1$ for all $n\in\mathbb{N}$ and $\lim_{n\to\infty}\|v_n-v\|=0$ for some $v\in V$. Then we have that $\lim_{N\to\infty}\big\|\sum_{k=1}^Nc_{N,k}v_k-v\big\|=0$.
\end{proposition}
\begin{proof}
This result is a simple generalization of the the standard Toeplitz theorem; see Theorem~2.6 in \cite{NcminusNc}, as well as pages 42-48 in \cite{Toeplitz}. For the sake of completeness we provide a proof.

Let $\varepsilon>0$ and let $C\coloneqq 1+\sup_{N\in\mathbb{N}}\sum_{k=1}^N|c_{N,k}|$. By (ii) there exists $N_0=N_0(\varepsilon)\in\mathbb{N}$ such that for all $N\ge N_0$ we have
\[
\Big|\sum_{k=1}^Nc_{N,k}-1\Big|<\frac{\varepsilon}{3}\text{.}
\] 
We also have that there exists $N_1=N_1(\varepsilon,C)\in\mathbb{N}$ such that for all $k\ge N_1$ we have $\|v_k-v\|\le \frac{\varepsilon}{3C}$. By (i) we get that for every $k\in\mathbb{N}$ there exists $N_2^{(k)}=N_2^{(k)}(\varepsilon,C)\in\mathbb{N}$ such that for all $N\ge N_2^{(k)}$ we have $|c_{N,k}|\le \frac{\varepsilon}{6N_1}$. Let $N_3(\varepsilon,C)\coloneqq \max\{N_2^{(k)},\,k\in[N_1]\}$. Then, for all $N> \max\{ N_0(\varepsilon),N_1(\varepsilon,C),N_3(\varepsilon,C)\}$, we have
\begin{multline*}
\Big\|v-\sum_{k=1}^Nc_{N,k}v_k\Big\|\le \Big\|v-\sum_{k=1}^Nc_{N,k}v\Big\|+\Big\|\sum_{k=1}^Nc_{N,k}v-\sum_{k=1}^Nc_{N,k}v_k\Big\|=\Big|1-\sum_{k=1}^Nc_{N,k}\Big|\|v\|+\Big\|\sum_{k=1}^Nc_{N,k}(v-v_k)\Big\|
\\
< \frac{\varepsilon}{3}\cdot 1+\sum_{k=1}^N|c_{N,k}|\|v-v_k\|\le \frac{\varepsilon}{3}+\sum_{k=1}^{N_1}|c_{N,k}|\|v-v_k\|+\sum_{k=N_1+1}^{N}|c_{N,k}|\|v-v_k\|<\frac{\varepsilon}{3}+N_1\cdot \frac{\varepsilon}{6N_1}\cdot 2+C\cdot\frac{\varepsilon}{3C}=\varepsilon\text{.}
\end{multline*}
The proof is complete.
\end{proof} 
Returning to the second summand of $\eqref{twosummands}$, we begin with the case $c>1$. We note that the three properties for the weights
\[
c_{N,k}\coloneqq 1_{[N-1]}(k)\frac{k\big((\varphi(k+1)-\varphi(k))-(\varphi(k+2)-\varphi(k+1))\big)}{(1-\gamma)\lfloor\varphi(N)\rfloor}
\] 
can be established using the basic properties of $\varphi$. The property (i) clearly holds, and since $c_{N,k}\ge 0$ for all $k\gtrsim 1$, we have that if (ii) is true then (iii) holds as well. The second property follows from the fact that for all $c\in[1,2)$ we have that
\begin{equation}\label{unweightedlimit}
\lim_{N\to\infty}\sum_{k=1}^{N-1} \frac{k\big((\varphi(k+1)-\varphi(k))-(\varphi(k+2)-\varphi(k+1))\big)}{\lfloor\varphi(N)\rfloor}=1-\gamma\text{,}\quad\text{see 2.15 in  \cite{NcminusNc}, pages 6-7.}
\end{equation}
By $\eqref{Czconverge}$, we see that Proposition~$\ref{Toeplitznvs}$ is applicable for $v_n=\mathbb{E}_{m\in[n]}f_m$ and $v=f$, yielding
\begin{equation}
\lim_{N\to\infty}\Big\|\sum_{k=1}^{N-1}\frac{k\big((\varphi(k+1)-\varphi(k))-(\varphi(k+2)-\varphi(k+1))\big)}{(1-\gamma)\lfloor\varphi(N)\rfloor}\mathbb{E}_{m\in[k]}f_m-f\Big\|_{L^2_{\mu}(X)}=0\text{,}
\end{equation}
and thus the $L^2_{\mu}(X)$-limit of the second summand of $\eqref{twosummands}$ is $(1-\gamma)f$.

For the case $c=1$ we have
\begin{multline}
\limsup_{N\to\infty}\Big\|\sum_{n=1}^{N-1}\frac{n\big((\varphi(n+1)-\varphi(n))-(\varphi(n+2)-\varphi(n+1))\big)}{\lfloor \varphi(N)\rfloor}\mathbb{E}_{m\in[n]}f_m\Big\|_{L^2_{\mu}(X)}
\\
\le \limsup_{N\to\infty}\sum_{n=1}^{N-1}\frac{n\big|(\varphi(n+1)-\varphi(n))-(\varphi(n+2)-\varphi(n+1))\big|}{\lfloor \varphi(N)\rfloor}
\\
=\limsup_{N\to\infty}\sum_{n=1}^{N-1}\frac{n\Big(\big(\varphi(n+1)-\varphi(n)\big)-\big(\varphi(n+2)-\varphi(n+1)\big)\Big)}{\lfloor \varphi(N)\rfloor}=0\text{,}
\end{multline}
where the last inequality can be justified by noting that the expression inside the absolute value is nonnegative for $n\gtrsim 1$, and the last equality is justified by $\eqref{unweightedlimit}$.

In both cases, the $L^2_{\mu}(X)$-limit of the second summand of $\eqref{twosummands}$ is $(1-\gamma)f$. Thus we get
\begin{multline}
\limsup_{N\to\infty}\bigg\|\frac{1}{\lfloor \varphi(N)\rfloor}\sum_{n\in [N]}\big(\varphi(n+1)-\varphi(n)\big)f_n-f\bigg\|_{L^2_{\mu}(X)}
\\
\le 
\limsup_{N\to\infty}\bigg\|\frac{N\big(\varphi(N+1)-\varphi(N)\big)}{\lfloor\varphi(N)\rfloor}\mathbb{E}_{n\in[N]}f_n-\gamma f\bigg\|_{L^2_{\mu}(X)}
\\
+
\limsup_{N\to\infty}\bigg\|\sum_{n=1}^{N-1}\frac{n\Big(\big(\varphi(n+1)-\varphi(n)\big)-\big(\varphi(n+2)-\varphi(n+1)\big)\Big)}{\lfloor \varphi(N)\rfloor}\mathbb{E}_{m\in[n]}f_m-(1-\gamma) f\bigg\|_{L^2_{\mu}(X)}=0\text{.}
\end{multline}
This justifies $\eqref{SBParts}$ and completes the proof.
\end{proof}
\section{Gowers norm bounds}\label{GNB}
This section is devoted to establishing the estimates $\eqref{U31}$ and $\eqref{U32}$. We note that although it would be possible to proceed in an abstract manner and continue working with arbitrary sequences of $1$-bounded functions $\mathfrak{f}=(f_n)_{n\in\mathbb{N}} $, we decided to make things concrete here to keep the statements of our theorems reasonable. We do note however, and it will be apparent from the proof, that our treatment may address all patterns inducing averaging operators appropriately bounded by $U^3$-norms. The process will be carried out in two steps, the first of which is an instance of the fact that the operators at hand are controlled by $U^3$-norms and is quite straightforward, and the second one amounts to, loosely speaking, bounding the $U^3$-norm of the error kernel. We remind the reader that 
\[
 w_N(n)\coloneqq 1_{[N]}(n)\frac{N\big(\Phi(-\varphi(n+1))-\Phi(-\varphi(n))\big)}{\lfloor \varphi(N)\rfloor}\text{.}
\]
\begin{proposition}\label{KeyProp1}Fix $c\in[1,23/22)$ and $h\in\mathcal{R}_c$. Assume $N\in\mathbb{N}$ and $f_0,f_1,f_2\colon \mathbb{Z}^2\to\mathbb{C}$ are $1$-bounded and supported in $[\pm 2N]^2$. Then there exist positive constants $C=C(h)$ and $\chi=\chi(h)$ such that
\begin{equation}
\big|\mathbb{E}_{x\in[\pm 2N]^2,n\in[N]}f_0(x)f_1(x+ne_1)f_2(x+ne_2)w_N(n)\big|\le C N^{-\chi}\text{,}
\end{equation}
where $e_1=(1,0)$ and $e_2=(0,1)$.
\end{proposition}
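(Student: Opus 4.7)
The plan is to follow the scheme sketched in subsection~\ref{strategy}. Fix a truncation parameter $M=M(N)$ to be optimized later, and decompose the sawtooth as $\Phi=\Phi_{\le M}+\Phi_{>M}$, where $\Phi_{\le M}(x)=-\sum_{0<|k|\le M}\frac{e(kx)}{2\pi i k}$. Substituting into the definition of $w_N$ induces a splitting $w_N=w_N^{\mathrm{main}}+w_N^{\mathrm{error}}$ with
\[
w_N^{\mathrm{main}}(n)=1_{[N]}(n)\frac{N}{\lfloor\varphi(N)\rfloor}\big(\Phi_{\le M}(-\varphi(n+1))-\Phi_{\le M}(-\varphi(n))\big)
\]
and an analogous formula for $w_N^{\mathrm{error}}$.

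Since $f_0,f_1,f_2$ are $1$-bounded, the contribution of $w_N^{\mathrm{error}}$ to the left-hand side is controlled by $\mathbb{E}_{n\in[N]}|w_N^{\mathrm{error}}(n)|$. Using the standard pointwise tail bound $|\Phi(x)-\Phi_{\le M}(x)|\lesssim \min(1,(M\|x\|)^{-1})$, together with the fact that $\varphi'(n)\simeq n^{\gamma-1}$ where $\gamma=1/c$, I would show by a Weyl-type distribution argument on the level sets of $\|\varphi(n)\|$ that choosing $M$ to be a small positive power of $N$ yields $\mathbb{E}_{n\in[N]}|w_N^{\mathrm{error}}(n)|\lesssim N^{-\chi_1}$ for some $\chi_1>0$.

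For the main term, I would apply the standard Gowers--Cauchy--Schwarz argument adapted to corner-type patterns (two successive applications of Cauchy--Schwarz in the $x$-variable, one for each of $e_1,e_2$) to obtain
\[
\big|\mathbb{E}_{x\in[\pm 2N]^2,n\in[N]}f_0(x)f_1(x+ne_1)f_2(x+ne_2)w_N^{\mathrm{main}}(n)\big|\lesssim N^{-1/2}\|w_N^{\mathrm{main}}\|_{U^3(\mathbb{Z})}\text{,}
\]
where the $N^{-1/2}$ factor accounts for the normalization of the weight over $[N]$. Thus the whole proposition reduces to the bound $\|w_N^{\mathrm{main}}\|_{U^3(\mathbb{Z})}\lesssim N^{1/2-\chi_2}$ for some $\chi_2>0$.

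The main obstacle, and the heart of the argument, is precisely this $U^3$-norm estimate. Expanding the Gowers norm and inserting the definition of $w_N^{\mathrm{main}}$, one reduces, via a telescoping mean-value-theorem step converting $\Phi_{\le M}(-\varphi(n+1))-\Phi_{\le M}(-\varphi(n))$ into $e(-k\varphi(n))\varphi'(n)$ modulo lower-order terms, to controlling multilinear exponential sums of the schematic shape $\sum_{n}e(k\varphi(n))\varphi'(n)$ twisted by Gowers shifts in $h_1,h_2,h_3$, with frequencies $|k|\le M$. Since $\varphi$ behaves like a slowly-varying perturbation of the fractional monomial $x^{1/c}$, with $\varphi',\varphi'',\varphi'''$ controlled by the derivative conditions in the definition of $\mathcal{R}_c$, an iterated van der Corput argument performed on the dyadic pieces of $w_N^{\mathrm{main}}$, much as in the lemma referred to as \emph{errorclean} in the introduction, yields a power saving. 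Balancing $M$ against this exponential-sum saving pins down the admissible range $c\in[1,23/22)$ and completes the proof.
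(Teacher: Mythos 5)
Your proposal traces the same route the paper takes: truncate the Fourier series of $\Phi$ at level $M\simeq N^{\sigma_0}$ with $\sigma_0=1-1/c+\varepsilon_0$, control the error weight in $L^1$ by equidistribution of $\varphi$ modulo one, pass to the bound $N^{-1/2}\|w_N^{\mathrm{main}}\|_{U^3}$ by a Gowers--Cauchy--Schwarz argument (the paper's Lemma~\ref{U3forFRA}), and then bound the $U^3$ norm of the main weight via exponential-sum estimates over dyadic pieces (the paper's Lemma~\ref{errorclean}). Two small inaccuracies worth noting: reaching $U^3$ requires three rounds of Cauchy--Schwarz plus van der Corput (one per factor $f_0,f_1,f_2$), not two; and the paper does not telescope $\Phi_{\le M}(-\varphi(n+1))-\Phi_{\le M}(-\varphi(n))$ into $e(-k\varphi(n))\varphi'(n)$, but instead fixes $h_3$ and invokes the $U^2$ inverse theorem to bound $\|\Delta_{h_3}L_{N,l}\|_{U^2}$ by a single exponential sum, to which the external estimate (Lemma~3.11 of \cite{NcminusNc}) is then applied.
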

\begin{proposition}\label{KeyProp2}Fix $c\in[1,23/22)$ and $h\in\mathcal{R}_c$. Assume $N\in\mathbb{N}$ and $f_0,f_1\colon\mathbb{Z}\to\mathbb{C}$ are $1$-bounded and supported in $[\pm 2N^2]$. Then there exist positive constants $C=C(h)$ and $\chi=\chi(h)$ such that
\begin{equation}\label{boundU3type}
\big|\mathbb{E}_{x\in[\pm 2N^2],n\in[N]}f_0(x)f_1(x+n^2)w_N(n)\big|\le C N^{-\chi}\text{.}
\end{equation}
\end{proposition}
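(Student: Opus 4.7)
The plan mirrors the four-step program outlined in subsection~\ref{strategy}, specialized to the quadratic two-term pattern; the argument is structurally parallel to Proposition~\ref{KeyProp1}, only the Cauchy--Schwarz reduction to Gowers norms differs. First, I would decompose the weight using the Fourier expansion $\Phi(t)=-\sum_{k\neq 0}\frac{e(kt)}{2\pi i k}$, truncated at height $M=N^{\alpha}$ for some small $\alpha>0$ to be chosen later. This produces $w_N=w_N^{\mathrm{main}}+w_N^{\mathrm{error}}$, where $w_N^{\mathrm{main}}$ corresponds to the truncated sum and $w_N^{\mathrm{error}}$ absorbs the Fourier tail. Standard bounds for the sawtooth tail, combined with the regularity of the compositional inverse $\varphi\in\mathcal{R}_c$ of $h$, yield $\mathbb{E}_{n\in[N]}|w_N^{\mathrm{error}}(n)|\lesssim N^{-\chi}$ for some $\chi=\chi(h)>0$. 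Since $|f_0|,|f_1|\leq 1$, the contribution of $w_N^{\mathrm{error}}$ to the left-hand side of \eqref{boundU3type} is trivially bounded by this $\ell^1$-average and is thus negligible.

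Writing $K\coloneqq w_N^{\mathrm{main}}$, it remains to establish the Gowers-norm bound
\[
\big|\mathbb{E}_{x\in[\pm 2N^2],n\in[N]}f_0(x)f_1(x+n^2)K(n)\big|\lesssim N^{-1/2}\|K\|_{U^3}.
\]
One Cauchy--Schwarz in $x$ eliminates the factor $f_0$; expanding the resulting square and setting $h_1=n-n'$ linearizes the quadratic phase since $n^2-(n-h_1)^2=2nh_1-h_1^2$, producing (after shifting $y=x+(n-h_1)^2$) an expression of the form $\sum_{h_1}\mathbb{E}_{n,y}\,K(n)\overline{K(n-h_1)}f_1(y+2nh_1-h_1^2)\overline{f_1(y)}$, modulo boundary contributions that are negligible because $n^2\leq N^2$ while $x$ ranges over $[\pm 2N^2]$. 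A second Cauchy--Schwarz, this time in the outer $h_1$-sum, followed by expanding the square and introducing further difference parameters $h_2,h_3$ via analogous substitutions, localizes $f_1$ to autocorrelations depending only on $h_1,h_2,h_3$ and rebuilds the triple-difference kernel $\Delta_{h_1,h_2,h_3}K$ characterizing the unnormalised $U^3$-norm of $K$. Bounding the $f_1$-autocorrelations trivially by $1$ leaves a quantity comparable to $N^{-1/2}\|K\|_{U^3}$, the prefactor reflecting that $K$ is supported on $[N]$.

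To complete the proof, one needs $\|K\|_{U^3}\lesssim N^{1/2-\chi}$ for some $\chi=\chi(h)>0$. I would split $K$ dyadically in the Fourier parameter $k$ and invoke Lemma~\ref{errorclean}, which furnishes a power-saving $U^3$-bound on each dyadic block; summing and choosing $\alpha$ appropriately closes the argument. The main obstacle is precisely this final Gowers-norm estimate: it amounts to proving cancellation in oscillatory sums with phase $k\varphi(n)$ under iterated differencing, and is exactly the step where the restriction $c\in[1,23/22)$ is used, via van der Corput/Weyl-type methods applied to the regularity of $\varphi$. By contrast, the Cauchy--Schwarz/van der Corput reduction in the second paragraph and the $\ell^1$-control of $w_N^{\mathrm{error}}$ are routine once the framework of Section~\ref{MTE} and Lemma~\ref{errorclean} is available.
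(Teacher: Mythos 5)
Your proposal follows essentially the same route as the paper: Fourier-expand the sawtooth $\Phi$ at a power-of-$N$ truncation, split $w_N=w_N^{\mathrm{main}}+w_N^{\mathrm{error}}$, dismiss the error in $\ell^1$, and reduce the main term to a power-saving bound $\|w_N^{\mathrm{main}}\|_{U^3}\lesssim N^{1/2-\chi}$ via iterated Cauchy--Schwarz that linearizes $n^2$ (the paper's Lemma~\ref{U3controlforsquares}) and then invoke Lemma~\ref{errorclean}. One small but notable correction: Lemma~\ref{errorclean} does \emph{not} decompose $w_N^{\mathrm{main}}$ dyadically in the Fourier parameter $k$; the decomposition is dyadic in the variable $n$, namely $w_{N,l}^{\mathrm{main}}(n)=w_N^{\mathrm{main}}(n)1_{[2^l,\min(2^{l+1},N+1))}(n)$, which is necessary because the oscillatory gain in the phase $m\varphi(n)$ depends on the local scale of $n$. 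Also, the truncation height $M=\lfloor N^{\sigma_0}\rfloor$ is not chosen ``small'': $\sigma_0=1-1/c+\varepsilon_0$ must lie in the window $(1-1/c,\,3/c-2)$, the lower endpoint being forced by the $\ell^1$ estimate of the Fourier tail. Neither point changes the structure of your argument, and the rest of your outline matches the paper.
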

We also formulate the following proposition, the proof of which is very similar to the ones of the previous two, and has essentially appeared in \cite{NcminusNc}.
\begin{proposition}\label{KeyProp3}Fix $c\in[1,23/22)$ and $h\in\mathcal{R}_c$. Assume $N\in\mathbb{N}$ and $f_0,f_1,f_2\colon\mathbb{Z}\to\mathbb{C}$ are $1$-bounded and supported in $[\pm 2N]$. Then there exist positive constants $C=C(h)$ and $\chi=\chi(h)$ such that
\begin{equation}\label{boundU3type}
\big|\mathbb{E}_{x\in[\pm 2N],n\in[N]}f_0(x)f_1(x+n)f_2(x+2n)w_N(n)\big|\le C N^{-\chi}\text{.}
\end{equation}
\end{proposition}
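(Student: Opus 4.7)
The plan is to follow the same two-step strategy used for Propositions~\ref{KeyProp1} and \ref{KeyProp2}, as outlined in subsection~\ref{strategy}. First I would perform a generalized von Neumann / Gowers--Cauchy--Schwarz reduction for the $3$-term arithmetic progression pattern on $\mathbb{Z}$: after three successive applications of Cauchy--Schwarz in the $x$ variable (or, equivalently, a van der Corput shift argument) and exploiting that $f_0,f_1,f_2$ are $1$-bounded and supported in $[\pm 2N]$, one obtains an inequality of the form
\[
\bigl|\mathbb{E}_{x\in[\pm 2N],\,n\in[N]}f_0(x)f_1(x+n)f_2(x+2n)w_N(n)\bigr|
\lesssim N^{-1/2}\bigl\|w_N\cdot 1_{[N]}\bigr\|_{U^3(\mathbb{Z})}^{\alpha}
\]
for a suitable positive $\alpha$. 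This is precisely the ``only property of the specific pattern'' alluded to in the strategy section, and it is entirely analogous to the $U^3$-control used for the corner pattern in Proposition~\ref{KeyProp1} and for the quadratic pattern in Proposition~\ref{KeyProp2}.

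Next I would decompose the weight as $w_N=w_N^{\mathrm{main}}+w_N^{\mathrm{error}}$, where $w_N^{\mathrm{main}}$ is obtained by replacing the sawtooth function $\Phi$ in the definition of $w_N$ by its truncated Fourier series at a suitable level $M(N)$, and $w_N^{\mathrm{error}}$ is the tail contribution. The tail is handled via its $L^1$-norm: using the uniform estimates for the Fourier tail of $\Phi$ and a judicious choice of $M(N)$, one obtains $\mathbb{E}_{n\in[N]}|w_N^{\mathrm{error}}(n)|\lesssim N^{-\chi}$, which — combined with the trivial bound $|f_i|\le 1$ and the support assumption — gives an acceptable contribution of size $O(N^{-\chi})$ to the trilinear average.

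For the main term, I would then follow the same dyadic decomposition of $w_N^{\mathrm{main}}$ with respect to the frequency ranges of the truncated Fourier series, and invoke Lemma~\ref{errorclean} to obtain the bound $\|w_N^{\mathrm{main}}\|_{U^3}\lesssim N^{1/2-\chi}$ on each dyadic piece, summing over pieces without loss. Substituting this bound into the first-step inequality then yields the required estimate $N^{-\chi}$ on the trilinear average.

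The main obstacle, as in the preceding two propositions, is concentrated entirely in the dyadic $U^3$-norm estimate for $w_N^{\mathrm{main}}$: after expanding the $U^3$-norm into a multiple sum, one is left with oscillatory exponential sums whose phases are built from $\varphi$ and its discrete differences, and the numerology $c\in[1,23/22)$ enters at exactly the point where van der Corput--type bounds for these phases become effective. This step, however, is carried out verbatim in \cite{NcminusNc}, so that for Proposition~\ref{KeyProp3} the only thing one really needs to verify is the $U^3$-control in the first step — which is the standard generalized von Neumann inequality for $3$-APs — and then transfer the $U^3$ estimate for $w_N^{\mathrm{main}}$ directly from the corresponding lemma.
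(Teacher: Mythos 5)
Your proposal is essentially correct and follows exactly the structure the paper has in mind: the paper does not write out a proof of Proposition~\ref{KeyProp3}, but states that the argument is ``very similar'' to those of Propositions~\ref{KeyProp1} and \ref{KeyProp2} and is already in \cite{NcminusNc}, and your reconstruction matches this. That is: (i) a generalized von Neumann / Gowers--Cauchy--Schwarz step showing the weighted $3$-AP form is controlled by the (unnormalized) $U^3$-norm of the weight on $n$ -- the analogue, for the pattern $x,x+n,x+2n$ on $\mathbb{Z}$, of Lemma~\ref{U3forFRA}; (ii) the decomposition $w_N = w_N^{\mathrm{main}} + w_N^{\mathrm{error}}$ via the truncated Fourier series of $\Phi$; (iii) the $L^1$ bound $\mathbb{E}_{n\in[N]}|w_N^{\mathrm{error}}(n)|\lesssim N^{-\chi}$, which handles the error contribution trivially; and (iv) the dyadic $U^3$-norm bound $\sum_l\|w_{N,l}^{\mathrm{main}}\|_{U^3}\lesssim N^{1/2-\chi}$ of Lemma~\ref{errorclean}, which is pattern-independent and applies verbatim.

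The one small imprecision is in step (i): the bound should read
\[
\bigl|\mathbb{E}_{x\in[\pm 2N],\,n\in[N]}f_0(x)f_1(x+n)f_2(x+2n)f_3(n)\bigr|\lesssim N^{-1/2}\|f_3\|_{U^3}
\]
with exponent $\alpha=1$, not an unspecified positive $\alpha$. The exponent $1$ is what three applications of Cauchy--Schwarz actually yield (as in Lemmas~\ref{U3forFRA} and~\ref{U3controlforsquares}, after suitable bookkeeping of the $N$-powers: here one gets $|T|^8\lesssim N^{12}\|f_3\|_{U^3}^8$ for the unnormalized sum, so $N^{-16}\cdot N^{12}\cdot N^4 = 1$ is the trivial bound and the exponent $1/2$ on $N$ emerges after taking eighth roots). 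Moreover, $\alpha=1$ is what lets you sum the dyadic pieces directly against the conclusion of Lemma~\ref{errorclean} without worrying about the number of scales. This is a cosmetic issue; the underlying argument is sound and identical in spirit to the paper's.
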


\subsection{Bounds by the $U^3$-norm}
\begin{lemma}\label{U3forFRA}Assume $N\in\mathbb{N}$, $f_0,f_1,f_2\colon \mathbb{Z}^2\to\mathbb{C}$ are $1$-bounded and supported in $[\pm 2N]^2$, and $f_3\colon\mathbb{Z}\to\mathbb{C}$ is $1$-bounded and supported in $[N]$. Then we have
\begin{equation}\label{U3box}
\big|\mathbb{E}_{x\in[\pm 2N]^2,n\in[N]}f_0(x)f_1(x+ne_1)f_2(x+ne_2)f_3(n)\big|\lesssim N^{-\frac{1}{2}}\|f_3\|_{U^3}\text{,}
\end{equation}
where the implied constant is absolute.
\end{lemma}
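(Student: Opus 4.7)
The plan is to apply Cauchy--Schwarz three times in succession to the form
\[
\Lambda:=\mathbb{E}_{x,n}f_0(x)f_1(x+ne_1)f_2(x+ne_2)f_3(n),
\]
each iteration absorbing one of the outer factors $f_0,f_1,f_2$ via its $1$-boundedness and accumulating a new difference parameter $h_i$ on $f_3$. After the three applications, the remaining multilinear form will be $\mathbb{E}_{h_1,h_2,h_3,n}\Delta_{h_1,h_2,h_3}f_3(n)$, which equals $\|f_3\|_{U^3}^8/N^4$ up to an absolute constant; extracting this normalization through the chain of squarings produces the prefactor $N^{-1/2}$ against the unnormalized $\|\cdot\|_{U^3}$.

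The first step is Cauchy--Schwarz in $x$, using $|f_0|\le 1$. Writing $n'=n+h_1$ and shifting $x\mapsto x-ne_1$ to eliminate the $n$-dependence of the $f_1$ terms yields
\[
|\Lambda|^2\le \mathbb{E}_{x,h_1,n}\Delta_{h_1e_1}f_1(x)\cdot\Delta_{h_1e_2}f_2\bigl(x+n(e_2-e_1)\bigr)\cdot f_3(n)\overline{f_3(n+h_1)}.
\]
Since the $f_1$-factor no longer depends on $n$, the next step is Cauchy--Schwarz in $(x,h_1)$ to dispose of it via $|f_1|\le 1$; after setting $n'=n+h_2$ and shifting $x\mapsto x-n(e_2-e_1)$ to eliminate the remaining $n$-dependence of $f_2$, one arrives at
\[
|\Lambda|^4\le \mathbb{E}_{x,h_1,h_2,n}\Delta_{h_2(e_2-e_1),h_1e_2}f_2(x)\cdot \Delta_{h_1,h_2}f_3(n),
\]
a bound in which the $f_2$-factor depends only on $(x,h_1,h_2)$ and the $f_3$-factor only on $(h_1,h_2,n)$.

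The third and final Cauchy--Schwarz is in $(h_1,h_2)$, cleanly separating the two factors. The $f_2$-factor contributes $\lesssim 1$ by $|f_2|\le 1$, while the square of the $f_3$-factor, upon expanding and setting the new difference to $h_3$, becomes $\mathbb{E}_{h_1,h_2,h_3,n}\Delta_{h_1,h_2,h_3}f_3(n)=N^{-4}\|f_3\|_{U^3}^8$ up to an absolute constant (since $f_3$ is supported in $[N]$ and each of the four averaging variables ranges over a set of size $O(N)$). Combining gives $|\Lambda|^4\lesssim N^{-2}\|f_3\|_{U^3}^4$, which is the desired bound. The only bookkeeping nuisance I anticipate is that each shift in $x$ slightly enlarges the effective box from $[\pm 2N]^2$ to $[\pm CN]^2$ for an absolute $C$; this is harmless since all functions are compactly supported and shifts have size $O(N)$, so every normalization factor remains of the correct order and the final implied constant is absolute, as claimed.
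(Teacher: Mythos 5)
Your proof is correct and follows essentially the same three-fold Cauchy--Schwarz iteration as the paper: each step absorbs one of $f_0,f_1,f_2$ via $1$-boundedness and accumulates a fresh difference parameter on $f_3$, ending with a factor $\bigl(\mathbb{E}_{h_1,h_2,h_3,n}\Delta_{h_1,h_2,h_3}f_3(n)\bigr)^{1/8}\simeq N^{-1/2}\|f_3\|_{U^3}$. The only stylistic difference is that the paper packages each squaring as an explicit application of van der Corput's inequality, carrying the weights $\mu_N(h_i)$ and the truncated index sets $J(h_1,\dotsc)$ through the computation, whereas you expand the squares directly and note once that the support restrictions keep all normalizations of the correct order; both routes yield the same bound.
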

Before proceeding with the proof, let us remark that the factor $N^{-\frac{1}{2}}$ in the right hand side of the above estimate naturally appears because we have chosen to work with the unnormalised $U^3$ Gowers norm and $\|1_{[N]}\|_{U^3}\simeq N^{1/2}$.
\begin{proof}
The proof is standard but for the sake of completeness we provide the details here. Taking into account the supports we see that to establish $\eqref{U3box}$ it suffices to show that
\begin{equation}\label{goal25}
\Big|\sum_{x\in\mathbb{Z}^2}\sum_{n\in\mathbb{Z}}f_0(x)f_1(x+ne_1)f_2(x+ne_2)f_3(n)\Big|^8\lesssim N^{20}\|f_3\|^8_{U^3}\text{.}
\end{equation}
By Cauchy-Schwarz and by taking into account the support of $f_0$ and the fact that it is $1$-bounded we get 
\begin{equation}\label{GOAL}
\Big|\sum_{x\in\mathbb{Z}^2}\sum_{n\in\mathbb{Z}}f_0(x)f_1(x+ne_1)f_2(x+ne_2)f_3(n)\Big|^2\lesssim N^2\sum_{x\in\mathbb{Z}^2}\Big|\sum_{n\in[N]}f_1(x+ne_1)f_2(x+ne_2)f_3(n)\Big|^2\text{.}
\end{equation}
An application of van der Corput inequality, see for example Lemma~3.1 in \cite{Pre} or Lemma~3.1 in \cite{PelPre} bounds the right-hand side of the above expression by
\begin{multline}
\lesssim N^3\sum_{x\in\mathbb{Z}^2}\sum_{h_1\in\mathbb{Z}}\mu_N(h_1)\sum_{n\in J(h_1)} f_1(x+ne_1)\overline{f_1(x+(n+h_1)e_1)}f_2(x+ne_2)\overline{f_2(x+(n+h_1)e_2)}\Delta_{h_1}f_3(n)
\\
=N^3\sum_{h_1\in\mathbb{Z}}\mu_N(h_1)\sum_{n\in J(h_1)}\Delta_{h_1}f_3(n) \sum_{y\in\mathbb{Z}^2}f_1(y)\overline{f_1(y+h_1e_1)}f_2(y-ne_1+ne_2)\overline{f_2(y-ne_1+(n+h_1)e_2)}
\\
=N^3\sum_{y\in\mathbb{Z}^2}\sum_{h_1\in\mathbb{Z}}\mu_N(h_1)f_1(y)\overline{f_1(y+h_1e_1)}\sum_{n\in J(h_1)} f_2(y-ne_1+ne_2)\overline{f_2(y-ne_1+(n+h_1)e_2)}\Delta_{h_1}f_3(n)\text{,}
\end{multline}
where $J(h_1)\coloneqq [N]\cap([N]-h_1)$. Squaring the left-hand side of $\eqref{GOAL}$ and repeating the process yields
\begin{multline}
\Big|\sum_{x\in\mathbb{Z}^2}\sum_{n\in\mathbb{Z}}f_0(x)f_1(x+ne_1)f_2(x+ne_2)f_3(n)\Big|^4\lesssim N^6\Big(\sum_{y\in\mathbb{Z}^2}\sum_{h_1\in\mathbb{Z}}|\mu_N(h_1)f_1(y)\overline{f_1(y+h_1e_1)}|^2\Big)\cdot
\\
\cdot\Big(\sum_{y\in\mathbb{Z}^2}\sum_{h_1\in[\pm N]}\Big|\sum_{n\in J(h_1)} f_2(y-ne_1+ne_2)\overline{f_2(y-ne_1+(n+h_1)e_2)}\Delta_{h_1}f_3(n)\Big|^2\Big)
\\
\lesssim N^8\sum_{y\in\mathbb{Z}^2}\sum_{h_1\in[\pm N]}\sum_{h_2\in\mathbb{Z}}\mu_N(h_2)\sum_{n\in J(h_1,h_2)}f_2(y-ne_1+ne_2)\overline{f_2(y-ne_1+(n+h_1)e_2)}\cdot
\\
\cdot \overline{f_2(y-(n+h_2)e_1+(n+h_2)e_2)}f_2(y-(n+h_2)e_1+(n+h_1+h_2)e_2)\Delta_{h_1,h_2}f_3(n)
\\
=N^8\sum_{h_1\in[\pm N]}\sum_{h_2\in\mathbb{Z}}\mu_N(h_2)\sum_{n\in J(h_1,h_2)}\Delta_{h_1,h_2}f_3(n)\sum_{x\in\mathbb{Z}^2}f_2(x)\overline{f_2(x+h_1e_2)}\cdot
\\
\cdot \overline{f_2(x-h_2e_1+h_2e_2)}f_2(x-h_2e_1+(h_1+h_2)e_2)\text{,}
\end{multline}
where $J(h_1,h_2)=J(h_1)\cap (J(h_1)-h_2)$. Repeating one final time we obtain
\begin{multline}
\Big|\sum_{x\in\mathbb{Z}^2}\sum_{n\in\mathbb{Z}}f_0(x)f_1(x+ne_1)f_2(x+ne_2)f_3(n)\Big|^8
\\
\lesssim N^{16}\sum_{h_1\in[\pm N]}\sum_{h_2\in\mathbb{Z}}|\mu_N(h_2)|^2\Big|\sum_{x\in\mathbb{Z}^2}f_2(x)\overline{f_2(x+h_1e_2)}\overline{f_2(x-h_2e_1+h_2e_2)}f_2(x-h_2e_1+(h_1+h_2)e_2)\Big|^2\cdot
\\
\cdot\sum_{h_1,h_2\in[\pm N]}\Big|\sum_{n\in J(h_1,h_2)}\Delta_{h_1,h_2}f_3(n)\Big|^2
\\
\lesssim N^{21}\sum_{h_1,h_2\in[\pm N]}\sum_{h_3\in\mathbb{Z}}\mu_N(h_3)\sum_{n\in J(h_1,h_2,h_3)}\Delta_{h_1,h_2,h_3}f_3(n)=N^{21}\sum_{n,h_1,h_2,h_3\in\mathbb{Z}}\mu_N(h_3)\Delta_{h_1,h_2,h_3}f_3(n)\text{,}
\end{multline}
where $J(h_1,h_2,h_3)=J(h_1,h_2)\cap(J(h_1,h_2)-h_3)$, and the last equality can be justified by taking into account the support of $f_3$. Finally, we note that since the $U^2$-norm is positive we may estimate as follows
\begin{multline}
N^{21}\Big|\sum_{n,h_1,h_2,h_3\in\mathbb{Z}}\mu_N(h_3)\Delta_{h_1,h_2,h_3}f_3(n)\Big|\le N^{21}\sum_{h_3\in\mathbb{Z}}|\mu_N(h_3)|\Big|\sum_{n,h_1,h_2\in\mathbb{Z}}\Delta_{h_1,h_2,h_3}f_3(n)\Big|
\\
\lesssim N^{20}\sum_{h_3\in\mathbb{Z}}\|\Delta_{h_3}f_3\|^4_{U^2}=N^{20}\|f_3\|_{U^3}^{8}\text{,}
\end{multline}
and thus $\eqref{goal25}$ is justified and the proof is complete.
\end{proof}
\begin{lemma}\label{U3controlforsquares}Assume $N\in\mathbb{N}$, $f_0,f_1\colon\mathbb{Z}\to\mathbb{C}$ are $1$-bounded and supported in $[\pm 2N^2]$, and $f_2\colon\mathbb{Z}\to\mathbb{C}$ is $1$-bounded and supported in $[N]$. Then we have
\begin{equation}\label{boundU3type}
\big|\mathbb{E}_{x\in[\pm 2N^2],n\in[N]}f_0(x)f_1(x+n^2)f_2(n)\big|\lesssim N^{-\frac{1}{2}}\|f_2\|_{U^3}\text{,}
\end{equation}
where the implied constant is absolute.
\end{lemma}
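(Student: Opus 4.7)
The plan is to mirror the strategy of Lemma~\ref{U3forFRA}: apply Cauchy--Schwarz in $x$ and then iterate a van der Corput (or, equivalently, Plancherel followed by Weyl-type) differencing in the $n$-variable until one reaches a sum controlled by $\|f_2\|_{U^3}$. The new wrinkle compared with Lemma~\ref{U3forFRA} is that the pattern $x,x+n^2$ has only two $x$-terms, while $n$ appears quadratically. Consequently, a single differencing in $n$ turns the quadratic shift into a linear-in-$n$ shift of the form $2nh_1+h_1^2$, and two further differencings in $n$ are required before the phase disappears and one is left with an expression of $U^3$ type.

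Concretely, writing $S\coloneqq \sum_{x,n} f_0(x)f_1(x+n^2)f_2(n)$ and $T(x)\coloneqq \sum_{n\in[N]} f_1(x+n^2)f_2(n)$, the first step is Cauchy--Schwarz in $x$, using $|\supp f_0|\lesssim N^2$:
\[
|S|^2 \lesssim N^2 \sum_{x\in\mathbb{Z}} |T(x)|^2.
\]
Taking the discrete Fourier transform in $x$ gives $\widehat{T}(\xi) = \widehat{f_1}(\xi)\,W(\xi)$, where $W(\xi)\coloneqq \sum_n f_2(n)e(\xi n^2)$, so by Plancherel and $\|f_1\|_{\ell^2}^2 \lesssim N^2$,
\[
\sum_x |T(x)|^2 \;\le\; \|W\|_{L^\infty(\mathbb{T})}^2\,\|f_1\|_{\ell^2}^2 \;\lesssim\; N^2\,\|W\|_{L^\infty(\mathbb{T})}^2.
\]
(Alternatively one can avoid Fourier entirely and reach the same intermediate bound by a single van der Corput step in $n$ on $|T(x)|^2$, changing variables $x\mapsto x-n^2$ to reduce to a shift-kernel in $f_1$.)

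The main step is then the Weyl-type bound $\|W\|_{L^\infty(\mathbb{T})} \lesssim N^{1/2}\|f_2\|_{U^3}$. Fixing $\xi\in\mathbb{T}$ and expanding $|W(\xi)|^2$, one Cauchy--Schwarz differencing replaces the quadratic phase by a linear phase $e(-2\xi h_1 n)$ weighted by $\Delta_{h_1}f_2(n)$, and a second differencing removes the remaining phase, producing $\sum_{h_1,h_2\in[\pm N]}\sum_n \Delta_{h_1,h_2}f_2(n)$ up to an absolutely bounded factor. A final Cauchy--Schwarz over $(h_1,h_2)\in[\pm N]^2$ together with the identity $\|f_2\|_{U^3}^8 = \sum_{h_1,h_2}\bigl|\sum_n \Delta_{h_1,h_2}f_2(n)\bigr|^2$ gives $|W(\xi)|^4 \lesssim N^2\|f_2\|_{U^3}^4$, hence $|W(\xi)|^2 \lesssim N\|f_2\|_{U^3}^2$ uniformly in $\xi$. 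Chaining the estimates yields $|S|^2 \lesssim N^5\|f_2\|_{U^3}^2$, and dividing by the total mass $N^3$ of the average produces the claimed $N^{-1/2}\|f_2\|_{U^3}$ bound. The only real obstacle is the careful bookkeeping of the powers of $N$ across the three differencing steps so that the final exponent comes out exactly to $-1/2$; no idea beyond those already present in Lemma~\ref{U3forFRA} appears to be needed.
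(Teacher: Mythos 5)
Your argument is correct and arrives at exactly the right power of $N$, but the route is genuinely different from the paper's. The paper mirrors the proof of Lemma~\ref{U3forFRA} verbatim: three rounds of van der Corput differencing applied directly to the bilinear sum in $(x,n)$, with the $f_1$ variable carried along through each round (appearing as shifted products $f_1(\cdot)\overline{f_1(\cdot+2nh_1+h_1^2)}$, then $\overline{f_1(\cdot)}f_1(\cdot+2h_2h_1)$, then disappearing after the third Cauchy--Schwarz), landing on $N^{21}\sum\mu_N(h_3)\Delta_{h_1,h_2,h_3}f_2$ and then on $N^{20}\|f_2\|_{U^3}^8$. You instead peel $f_1$ off at the start via Plancherel, factoring $\widehat{T}=\widehat{f_1}\cdot W$, and reduce the whole problem to the clean Weyl-sum bound $\|W\|_{L^\infty(\mathbb{T})}\lesssim N^{1/2}\|f_2\|_{U^3}$, which you then obtain by two exact expansions of $|W|^2$ and $|a(h_1)|^2$, a Cauchy--Schwarz in $h_1$, a triangle inequality, and a final Cauchy--Schwarz in $(h_1,h_2)$ against the identity $\|f_2\|_{U^3}^8=\sum_{h_1,h_2}\bigl|\sum_n\Delta_{h_1,h_2}f_2(n)\bigr|^2$. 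Chaining, $|S|^2\lesssim N^2\cdot N^2\|W\|_\infty^2\lesssim N^5\|f_2\|_{U^3}^2$, and dividing by the mass $\simeq N^3$ of the average gives the claim; I checked the bookkeeping and it is right. What your route buys is modularity and transparency: it isolates a one-variable exponential-sum estimate and uses only $\|f_1\|_{\ell^2}^2\lesssim N^2$, so the dependence on $f_1$ is dealt with in one line. What the paper's route buys is uniformity: the argument is word-for-word parallel to Lemma~\ref{U3forFRA} (same van der Corput machinery, same intermediate powers $N^3$, $N^8$, $N^{21}$), which makes the two lemmas symmetric and requires no Fourier input. One small point worth spelling out if you write this up: justify that $h_1,h_2$ range over $[\pm N]$ (which follows from $\supp f_2\subseteq[N]$) before invoking Cauchy--Schwarz over $(h_1,h_2)\in[\pm N]^2$, since that is where the factor $N$ in $|W|^4\lesssim N^2\|f_2\|_{U^3}^4$ comes from.
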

\begin{proof}
The proof is similar to the proof of Lemma~$\ref{U3forFRA}$. It suffices to prove that
\begin{equation}
\Big|\sum_{x\in[\pm 2N^2]}\sum_{n\in[N]}f_0(x)f_1(x+n^2)f_2(n)\Big|^8\lesssim N^{20}\|f_2\|^8_{U^3}\text{.}
\end{equation}
Letting $J(h_1)$, $J(h_1,h_2)$,$J(h_1,h_2,h_3)$ be as in the previous proof and following the same steps we obtain
\begin{multline}
\Big|\sum_{x\in\mathbb{Z}}\sum_{n\in\mathbb{Z}}f_0(x)f_1(x+n^2)f_2(n)\Big|^2
\lesssim N^2\sum_{x\in\mathbb{Z}}\Big|\sum_{n\in\mathbb{Z}}f_1(x+n^2)f_2(n)\Big|^2
\\
\lesssim 
N^3\sum_{x\in\mathbb{Z}}\sum_{h_1\in\mathbb{Z}}\mu_N(h_1)\sum_{n\in J(h_1)}f_1(x+n^2)\overline{f_1(x+n^2+2nh_1+h_1^2)}\Delta_{h_1}f_2(n)
\\
=N^3\sum_{h_1\in\mathbb{Z}}\mu_N(h_1)\sum_{n\in J(h_1)}\sum_{y\in\mathbb{Z}}f_1(y)\overline{f_1(y+2nh_1+h_1^2)}\Delta_{h_1}f_2(n)\text{,}
\end{multline}
and thus
\begin{multline}
\Big|\sum_{x\in\mathbb{Z}}\sum_{n\in\mathbb{Z}}f_0(x)f_1(x+n^2)f_2(n)\Big|^4
\\
\lesssim N^6\Big(\sum_{h_1\in\mathbb{Z}}\sum_{y\in\mathbb{Z}}|f_2(y)\mu_N(h_1)|^2\Big) \Big(\sum_{h_1\in[\pm N]}\sum_{y\in\mathbb{Z}}\Big|\sum_{n\in J(h_1)}\overline{f_1(y+2nh_1+h_1^2)}\Delta_{h_1}f_2(n)\Big|^2\Big)
\\
\lesssim N^8\sum_{h_1\in[\pm N]}\sum_{y\in\mathbb{Z}}\sum_{h_2\in\mathbb{Z}}\mu_N(h_2)\sum_{n\in J(h_1,h_2)}\overline{f_1(y+2nh_1+h_1^2)}f_1(y+2(n+h_2)h_1+h_1^2)\Delta_{h_1,h_2}f_2(n)
\\
=N^8\sum_{h_1\in[\pm N]}\sum_{h_2\in\mathbb{Z}}\mu_N(h_2)\sum_{n\in J(h_1,h_2)}\sum_{x\in\mathbb{Z}}\overline{f_1(x)}f_1(x+2h_2h_1)\Delta_{h_1,h_2}f_2(n)\text{.}
\end{multline}
One final iteration yields
\begin{multline}
\Big|\sum_{x\in\mathbb{Z}}\sum_{n\in\mathbb{Z}}f_0(x)f_1(x+n^2)f_2(n)\Big|^8
\\
\lesssim N^{16}\Big(\sum_{h_1\in[\pm N]}\sum_{h_2\in\mathbb{Z}}|\mu_N(h_2)|^2\Big|\sum_{x\in\mathbb{Z}}\overline{f_1(x)}f_1(x+2h_2h_1)\Big|^2\Big)\Big(\sum_{h_1\in[\pm N]}\sum_{h_2\in[\pm N]}\Big|\sum_{n\in J(h_1,h_2)}\Delta_{h_1,h_2}f_2(n)\Big|^2\Big)
\\
\lesssim N^{21}\sum_{h_1,h_2\in[\pm N]}\sum_{h_3\in\mathbb{Z}}\mu_{N}(h_3)\sum_{n\in J(h_1,h_2,h_3)}\Delta_{h_1,h_2,h_3}f_2(n)\lesssim N^{20}\|f_3\|_{U^3}^8\text{,}
\end{multline}
where the last estimate can be established as in the previous proof.
\end{proof}
\subsection{Bounds for the $U^3$-norm of the error kernel}
Before applying Lemma~$\ref{U3forFRA}$ and Lemma~$\ref{U3controlforsquares}$ for estimating $\eqref{U31}$ and $\eqref{U32}$ respectively, it will be convenient to pass to a refinement of the error kernel  $w_N$. More precisely, for every $M\in\mathbb{N}_{\ge 2}$ we have 
\begin{equation}\label{Fourier}
\Phi(x)\coloneqq\{x\}-1/2=\sum_{0<|m|\le M}\frac{1}{2\pi im}e(-mx)+g_M(x)\text{,}
\end{equation}
with $g_M(x)=O\Big(\min\Big\{1,\frac{1}{M\|x\|}\Big\}\Big)$ and the implied constant is absolute, see Lemma~2.3 in \cite{NcminusNc}. We apply this for carefully chosen parameters to replace $\Phi$ in our error kernel $w_{N}$. We fix
\begin{equation}\label{choicesforsme}
\varepsilon_0\coloneqq \frac{23-22c}{40c}\text{,}\quad\sigma_0\coloneqq1-\frac{1}{c}+\varepsilon_0\text{,}\quad M\coloneqq \lfloor N^{\sigma_0}\rfloor\text{,}
\end{equation}
and by applying $\eqref{Fourier}$ we obtain
\begin{multline}
w_{N}(n)=\frac{N\big(\Phi(-\varphi(n+1))-\Phi(-\varphi(n))\big)}{\lfloor \varphi(N)\rfloor}1_{[N]}(n)
\\
=\frac{N}{\lfloor \varphi(N)\rfloor}\bigg(\sum_{0<|m|\le M}\frac{e(m\varphi(n+1))-e(m\varphi(n))}{2\pi i m}\bigg)1_{[N]}(n)
\\
+\frac{N}{\lfloor \varphi(N)\rfloor}\big(g_M(-\varphi(n+1))-g_M(-\varphi(n))\big)1_{[N]}(n)\eqqcolon w^{\text{main}}_{N}(n)+w^{\text{error}}_{N}(n) \text{.}
\end{multline}
To complete the proof of Proposition~$\ref{KeyProp1}$ and Proposition~$\ref{KeyProp2}$ it suffices to prove the following.
\begin{lemma}\label{errorclean}
Assume $c\in[1,23/22)$ and $h\in\mathcal{R}_c$. Then there exist positive constants $C=C(h)$ and $\chi=\chi(h)$ such that
\begin{equation}
\sum_{l\in\mathbb{N}_0}\|w^{\textnormal{main}}_{N,l}\|_{U^3}\le CN^{\frac{1}{2}-\chi}\quad\text{and}\quad\mathbb{E}_{n\in[N]}|w^{\textnormal{error}}_{N}(n)|\le CN^{-\chi}\text{,}
\end{equation}
where $w^{\textnormal{main}}_{N,l}(n)\coloneqq w^{\textnormal{main}}_{N}(n)1_{[2^l\min(2^{l+1},N+1))}(n)$.
\end{lemma}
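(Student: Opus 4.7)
The plan is to handle the two claims of Lemma~\ref{errorclean} separately. The estimate for the error kernel is essentially a standard equidistribution computation, while the $U^3$-norm bound for the main kernel carries the substantive content and is where the numerical threshold $c<23/22$ enters.

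For the error-kernel inequality, I would start from the uniform bound $|g_M(x)|\lesssim \min\{1,1/(M\|x\|)\}$ recorded in \eqref{Fourier}, which together with the triangle inequality yields
\[
\mathbb{E}_{n\in[N]}|w^{\text{error}}_N(n)|\lesssim \frac{1}{\varphi(N)}\sum_{n=1}^N\min\Big\{1,\frac{1}{M\|\varphi(n)\|}\Big\}\text{.}
\]
Dyadically decomposing the sum according to the size of $\|\varphi(n)\|$, and using that $\varphi$ is monotone with $\varphi'(n)\simeq n^{1/c-1}$ up to slowly varying corrections afforded by the $\mathcal{R}_c$-structure, a standard level-set count yields $\#\{n\le N:\|\varphi(n)\|\le \delta\}\lesssim \delta N$ for every $\delta\in(0,1/2)$. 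Summing the dyadic contributions gives $\sum_{n\le N}\min\{1,1/(M\|\varphi(n)\|)\}\lesssim (N\log N)/M$, and combined with $\varphi(N)\simeq N^{1/c}$ and the choice $M=\lfloor N^{\sigma_0}\rfloor$, $\sigma_0=1-1/c+\varepsilon_0$, this gives a bound of order $N^{-\varepsilon_0}\log N$, which is $\le CN^{-\chi}$ for any $\chi<\varepsilon_0$.

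For the $U^3$-bound on the main kernel, I would first use the fundamental theorem of calculus to write
\[
\frac{e(m\varphi(n+1))-e(m\varphi(n))}{2\pi i m}=\int_0^1\varphi'(n+t)\,e(m\varphi(n+t))\,dt\text{,}
\]
so that $w^{\text{main}}_{N,l}$ becomes an integral over $t\in[0,1]$ of weighted oscillatory sums. On the dyadic block $[2^l,\min(2^{l+1},N+1))$, both $\varphi'(n)$ and the prefactor $N/\varphi(N)$ are essentially constant, reducing the problem to a power-saving $U^3$-estimate for the restriction of $\sum_{0<|m|\le M}e(m\varphi(n))$ to such an interval. Expanding $\|\cdot\|_{U^3}^8$ as a quadruple sum over $x,h_1,h_2,h_3$, inserting the Fourier expansion, and interchanging summations, the task reduces to power-saving bounds for exponential sums of the shape $\sum_{n\in I_l}e(m\,\Delta_{h_1,h_2,h_3}\varphi(n))$, whose phase has size controlled by $|mh_1h_2h_3|\cdot|\varphi'''(n)|\simeq |mh_1h_2h_3|\cdot 2^{l(1/c-3)}$ on $I_l\subseteq[2^l,2^{l+1})$.

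The main technical obstacle will be the precise bookkeeping in the ensuing van der Corput / Weyl step. The ranges are $|m|\le M=N^{1-1/c+\varepsilon_0}$, the shifts satisfy $|h_i|\lesssim 2^l$, and one has to extract an oscillatory saving from $\varphi'''$ that dominates both the accumulated losses from the prefactor $N/(\varphi(N)\cdot m)$ and those arising from iterating van der Corput. A careful choice of exponents should yield $\|w^{\text{main}}_{N,l}\|_{U^3}\lesssim 2^{l/2}N^{-\chi'}$ for some $\chi'=\chi'(h)>0$, with the threshold $c<23/22$ appearing precisely as the point where the Weyl saving remains positive once the size of $M$ and the derivative losses are taken into account. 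Summing over $l\in\mathbb{N}_0$ (there are $O(\log N)$ nontrivial indices, and the geometric factor $2^{l/2}$ is dominated by its largest value) then delivers the required $CN^{1/2-\chi}$ bound. I would model the argument very closely on Section~3 of \cite{NcminusNc}, in particular on the analogue of Lemma~3.3 there, since the only inputs required are the growth of $\varphi$ and its derivatives on dyadic blocks, and these are unchanged in the present setting.
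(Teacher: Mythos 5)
Your overall plan follows the same framework as the paper's proof (which in turn imports its key estimates from \cite{NcminusNc}), but there are two genuine gaps.

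First, the error-term estimate is not the routine equidistribution computation you describe. Your claimed level-set count $\#\{n\le N:\|\varphi(n)\|\le\delta\}\lesssim\delta N$ is false in the small-$\delta$ regime: deterministically, each of the roughly $\varphi(N)$ intervals $\{n:|\varphi(n)-k|\le\delta\}$ can contain one lattice point even when its length $\simeq\delta/\varphi'(n_k)$ is $\ll 1$, so the honest elementary bound is $\lesssim\delta N+\varphi(N)$. Pushing the extra $\varphi(N)$ term through your dyadic decomposition gives $\sum_{n\le N}\min\{1,1/(M\|\varphi(n)\|)\}\lesssim N\log M/M+\varphi(N)$, and after dividing by $\varphi(N)$ the second contribution is $O(1)$ — no power saving. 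To obtain a power saving one must input a quantitative discrepancy estimate, which is exactly what the paper does by citing Lemma~2.26 of \cite{NcminusNc}: that lemma delivers the bound
\[
\frac{N\log M}{M\varphi(N)}+\frac{NM^{1/2}\log N}{\varphi(N)^{3/2}\sigma(N)^{1/2}}\text{,}
\]
and the second term (coming from exponential-sum/Erd\H{o}s--Tur\'an considerations) imposes the \emph{additional} constraint $\sigma_0<3/c-2$ on the truncation parameter, alongside $\sigma_0>1-1/c$. Your argument sees only the first constraint.

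Second, for the $U^3$-bound your proposed route (expand $\|\cdot\|_{U^3}^8$, insert the Fourier series, and run van der Corput on $\sum_n e(m\,\Delta_{h_1,h_2,h_3}\varphi(n))$, reading the phase off $\varphi'''$) is structurally different from the paper's proof. Inserting the truncated Fourier expansion into the eight factors of $\Delta_{h_1,h_2,h_3}w^{\textnormal{main}}_{N,l}(x)$ produces an eight-fold sum over frequencies, not the single-$m$ phase you write, and a triple van der Corput iteration on the remaining sum would accumulate additional losses that have to be tracked. The paper instead normalises to the $1$-bounded kernel $L_{N,l}$, applies the $U^2$-inverse theorem to each $\Delta_{h_3}L_{N,l}$ to reduce to a single Fourier coefficient $\sup_{\xi}\big|\sum_x\Delta_{h_3}L_{N,l}(x)e(x\xi)\big|$, and then cites a pre-packaged exponential sum estimate (Lemma~3.11 of \cite{NcminusNc}, applied with $\kappa=(9c-6)/5$) that handles the range $|h_3|\ge\varphi(2^l)^{\kappa}$; the complementary range is disposed of trivially. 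This second-derivative/one-difference route is what actually produces the threshold $c<23/22$ after the bookkeeping, and it is not interchangeable with a $\varphi'''$-based triple-difference argument without redoing all the exponent calculations. In short: you have correctly identified that a van der Corput mechanism is at work, but the place where it is applied and the derivative order that enters are both different, and you have left unverified precisely the step you flag as ``the main technical obstacle.''
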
 
Before giving a proof we show how one may yield the desired results using the lemma above.
\begin{proof}[Concluding the proof of Proposition~$\ref{KeyProp1}$ and $\ref{KeyProp2}$]
We note that
\begin{multline*}
\big|\mathbb{E}_{x\in[\pm 2N]^2,n\in[N]}f_0(x)f_1(x+ne_1)f_2(x+ne_2)w_N(n)\big|\\
\le \big|\mathbb{E}_{x\in[\pm 2N]^2,n\in[N]}f_0(x)f_1(x+ne_1)f_2(x+ne_2)w^{
\text{main}}_N(n)\big|+\mathbb{E}_{n\in[N]}|w^{\text{error}}_N(n)|
\\
\lesssim\sum_{l\in\mathbb{N}_0}\big|\mathbb{E}_{x\in[\pm 2N]^2,n\in[N]}f_0(x)f_1(x+ne_1)f_2(x+ne_2)w^{
\text{main}}_{N,l}(n)\big|+N^{-\chi}
\\
\lesssim N^{-\frac{1}{2}}\sum_{l\in\mathbb{N}_0}\|w^{\text{main}}_{N,l}\|_{U^3}+N^{-\chi}\lesssim N^{-\chi}\text{.}
\end{multline*}
Proposition~$\ref{KeyProp2}$ can be established in an identical manner.
\end{proof}
\begin{proof}[Proof of Lemma~$\ref{errorclean}$]
The proof is identical to the proof of Proposition~2.24(ii) and Lemma~3.8 in \cite{NcminusNc}. Nevertheless, for the convenience of the reader we provide some details here. Let us begin with the second assertion. Note that 
\begin{multline}
\mathbb{E}_{n\in[N]}|w^{\textnormal{error}}_{N}(n)|\lesssim \frac{1}{\lfloor \varphi(N)\rfloor}\sum_{n\in [N]}\big(|g_M(-\varphi(n+1))|+|g_M(-\varphi(n))|\big)
\\
\lesssim \frac{1}{\varphi(N)}\sum_{n\in[N]}\bigg(\min\bigg\{1,\frac{1}{M\|\varphi(n+1)\|}\bigg\}+\min\bigg\{1,\frac{1}{M\|\varphi(n)\|}\bigg\}\bigg)
\\
\lesssim\frac{N\log M}{M\varphi(N)}+\frac{NM^{1/2}\log N}{\varphi(N)^{3/2}\sigma(N)^{1/2}}
\text{,}
\end{multline}
where we have used Lemma~2.26 in \cite{NcminusNc}. Let us mention that if $c>1$, we let $\sigma$ be the constant function $1$, and if $c=1$, $\sigma$ is as in Lemma~2.14 in \cite{MMR}. To obtain a bound of the form $\lesssim N^{-\chi}$ for some $\chi>0$, for the first summand, it suffices to have that $1-\sigma_0-\frac{1}{c}<0\iff \sigma_0>1-\frac{1}{c}$,
and for the second summand, it suffices to have $1+\frac{\sigma_0}{2}-\frac{3}{2c}<0\iff \sigma_0<\frac{3}{c}-2$, where for the case $c=1$ we took into account that $\sigma(x)^{-1}\lesssim_{\delta}x^{\delta}$ for all $\delta>0$, see Lemma~2.14 in \cite{MMR}. Any $\sigma_0\in\big(1-\frac{1}{c},\frac{3}{c}-2\big)$ is admissible, and it is not difficult to check that our choice
\[
\sigma_0\coloneqq1-\frac{1}{c}+\varepsilon_0\text{,}\quad\text{see $\eqref{choicesforsme}$,}
\]
belongs in this interval. Thus we have shown that there exists $\chi>0$ such that $\mathbb{E}_{n\in[N]}|w^{\textnormal{error}}_{N}(n)|\lesssim N^{-\chi}$, and the second assertion of the lemma is established.

For the first bound, it will be more convenient to work with an unweighted variant of the kernel. To this end, we define
\begin{equation}\label{LNdef}
L_{N,l}(n)\coloneqq \frac{\lfloor\varphi(N)\rfloor}{N}w^{\text{main}}_{N,l}(n)=1_{[2^l,\min(2^{l+1},N+1))}(n)\sum_{0<|m|\le M}\frac{e(m\varphi(n+1))-e(m\varphi(n))}{2\pi i m}\text{.}
\end{equation}
By $\eqref{Fourier}$ we get that $|L_{N,l}|\lesssim 1$, and thus for every $h_3\in\mathbb{Z}$ we have
\begin{equation}\label{UnWeighted}
\sum_{x,h_1,h_2\in\mathbb{Z}}\Delta_{h_1,h_2}\big(\Delta_{h_3}w^{\text{main}}_{N,l}\big)(x)=\|\Delta_{h_3}w^{\text{main}}_{N,l}\|_{U^2}^4=N^8\lfloor \varphi(N)\rfloor^{-8}\|\Delta_{h_3}L_{N,l}\|_{U^2}^4\text{.}
\end{equation}
Taking into account the fact that $|L_{N,l}|\lesssim 1$ and applying the inverse theorem for the $U^2$-norm, see Lemma~A.1 in \cite{PelPre}, we obtain the following bound
\begin{equation}\label{U2Inv}
\|\Delta_{h_3}L_{N,l}\|_{U^2}^4\lesssim N \sup_{\xi\in\mathbb{T}}\Big|\sum_{x\in\mathbb{Z}}\big(\Delta_{h_3}L_{N,l}(x)\big)e(x\xi)\Big|^2\text{.}
\end{equation}
We apply Lemma~3.11 from \cite{NcminusNc} with $\kappa=\frac{9c-6}{5}$ to obtain that for every $N\in\mathbb{N}$, $l\in[0,\log_2(N+1)]\cap \mathbb{Z}$ and $|h_3|\ge \varphi(2^l)^{\kappa}$ the following bound holds
\begin{equation}\label{HardEstimates}
\Big\|\sum_{x\in\mathbb{Z}}\big(\Delta_{h_3}L_{N,l}(x)\big)e(x\xi)\Big\|_{L_{d\xi}^{\infty}(\mathbb{T})}\lesssim 2^{-\frac{2l}{3}}\sigma(2^l)^{-\frac{1}{3}}\varphi(2^l)^{\frac{5-\kappa}{3}}M^{\frac{8}{3}}\text{.}
\end{equation}
Using $\eqref{UnWeighted}$, $\eqref{U2Inv}$ and $\eqref{HardEstimates}$ we may estimate as follows
\begin{multline}
\|w^{\text{main}}_{N,l}\|_{U^3}^8=\sum_{h_3\in\mathbb{Z}}\sum_{x,h_1,h_2\in\mathbb{Z}}\Delta_{h_1,h_2,h_3}w^{\text{main}}_{N,l}(x)=N^8\lfloor \varphi(N)\rfloor^{-8}\sum_{h_3\in[\pm N]}\|\Delta_{h_3}L_{N,l}\|_{U^2}^4
\\
\lesssim N^9\lfloor \varphi(N)\rfloor^{-8}\sum_{|h_3|\le \varphi(2^l)^{\kappa}}\Big\|\sum_{x\in\mathbb{Z}}\big(\Delta_{h_3}L_{N,l}(x)\big)e(x\xi)\Big\|^2_{L_{d\xi}^{\infty}(\mathbb{T})}\\+N^9\lfloor \varphi(N)\rfloor^{-8}\sum_{\varphi(2^l)^{\kappa}<|h_3|\le N}\Big\|\sum_{x\in\mathbb{Z}}\big(\Delta_{h_3}L_{N,l}(x)\big)e(x\xi)\Big\|^2_{L_{d\xi}^{\infty}(\mathbb{T})}
\\ \lesssim N^{11}\varphi(N)^{-8+\kappa}+N^{10}\varphi(N)^{-8}2^{-\frac{4l}{3}}\sigma(2^l)^{-\frac{2}{3}}\varphi(2^l)^{\frac{10-2\kappa}{3}}M^{\frac{16}{3}}
\\
=N^{11}\varphi(N)^{-8+\frac{9c-6}{5}}+N^{10}\varphi(N)^{-8}2^{-\frac{4l}{3}}\sigma(2^l)^{-\frac{2}{3}}\varphi(2^l)^{\frac{10-2\kappa}{3}}N^{\frac{16}{3}-\frac{16}{3c}+\frac{16}{3}\varepsilon_0}
\\
=N^{11}\varphi(N)^{\frac{9c-46}{5}}+N^{\frac{46}{3}-\frac{16}{3c}+\frac{16}{3}\varepsilon_0}\varphi(N)^{-8}2^{-\frac{4l}{3}}\sigma(2^l)^{-\frac{2}{3}}\varphi(2^l)^{\frac{10-2\kappa}{3}}
\text{,}
\end{multline}
and thus
\begin{equation}
\|w^{\text{main}}_{N,l}\|_{U^3}\lesssim N^{\frac{11}{8}}\varphi(N)^{\frac{9c-46}{40}}+N^{\frac{23}{12}-\frac{8}{12c}+\frac{8}{12}\varepsilon_0}\varphi(N)^{-1}2^{-\frac{l}{6}}\sigma(2^l)^{-\frac{1}{12}}\varphi(2^l)^{\frac{5-\kappa}{12}}
\text{,} 
\end{equation}
and finally
\begin{multline}
\sum_{l\in\mathbb{N}_0}
\|w^{\text{main}}_{N,l}\|_{U^3}=\sum_{0\le l\le \log_2(N+1)}
\|w^{\text{main}}_{N,l}\|_{U^3}\lesssim \log(N) N^{\frac{11}{8}}\varphi(N)^{\frac{9c-46}{40}}\\+N^{\frac{23}{12}-\frac{8}{12c}+\frac{8}{12}\varepsilon_0}\varphi(N)^{-1}\sum_{0\le l\le \log_2(N+1)}2^{-\frac{l}{6}}\sigma(2^l)^{-\frac{1}{12}}\varphi(2^l)^{\frac{5-\kappa}{12}}\lesssim \log(N) N^{\frac{11}{8}}\varphi(N)^{\frac{9c-46}{40}}\\+N^{\frac{23}{12}-\frac{8}{12c}+\frac{8}{12}\varepsilon_0}\varphi(N)^{-1}N^{-\frac{1}{6}+\frac{5}{12c}-\frac{\kappa}{12c}+\frac{1}{6}\varepsilon_0}\lesssim N^{\frac{4}{8}}N^{\frac{7}{8}}N^{\frac{9c-46}{40c}}N^{\varepsilon_0}+N^{\frac{1}{2}}N^{\frac{22}{20}-\frac{46}{40c}}N^{\varepsilon_0}\\
\lesssim N^{\frac{1}{2}}N^{\frac{44c-46}{40c}}N^{\varepsilon_0}=N^{\frac{1}{2}}N^{\frac{22c-23}{40c}}
\text{,} 
\end{multline}
where the estimation of the sum over $l$ relies on the basic properties of $\varphi$ and $\sigma$, see 3.20 in \cite{NcminusNc} for detailed calculations. The proof is complete since $(22c-23)/40c<0$ because $c<23/22$.
\end{proof}

\section{Calder\'on's Transference Principle and the proofs of the Ergodic Theorems}\label{CTP}
This section is devoted to explaining how Proposition~$\ref{KeyProp1}$ and Proposition~$\ref{KeyProp2}$ allows us to establish Theorem~$\ref{FRA}$ and Theorem~$\ref{n2pt}$ respectively. A straightforward instance of Calder\'on's transference principle,  which in this case amounts to a rather simple averaging argument, transforms $\eqref{U31}$ and $\eqref{U32}$ to $L^1$- estimates for the error terms, see Proposition~$\ref{L1controlEN}$. In the end of the section we combine these estimates with an application of Proposition~$\ref{MaintermExtraction}$ and conclude the proof of both ergodic theorems. 

We remind the reader that for every probability space $(X,\mathcal{B},\mu)$ and every sequence $(f_n)_{n\in\mathbb{N}}$ of $1$-bounded functions on $X$ we define 
\begin{equation}\label{defen}
E_N\big((f_n)_{n\in\mathbb{N}}\big)(x)\coloneqq \mathbb{E}_{n\in[N]}w_N(n)f_n(x)=\sum_{n\in[N]}\frac{\Phi(-\varphi(n+1))-\Phi(-\varphi(n))}{\lfloor \varphi(N)\rfloor}f_n(x)\text{.}
\end{equation}
\begin{proposition}\label{L1controlEN}Assume $c\in[1,23/22)$ and $h\in\mathcal{R}_c$. Let $(X,\mathcal{B},\mu)$ be a probability space and $T,S$ measure-preserving transformations on $X$ which commute. Then there exist positive constants $C=C(h)$ and $\chi=\chi(h)$ such that for every pair of $1$-bounded functions $f,g\colon X\to\mathbb{C}$ we have
\begin{equation}\label{L1saving}
\big\|E_N\big((f\circ T^n \cdot g\circ S^n)_{n\in\mathbb{N}}\big)\big\|_{L^1_{\mu}(X)}\le CN^{-\chi}\quad\text{and}\quad\big\|E_N\big((f\circ T^{n^2})_{n\in\mathbb{N}}\big)\big\|_{L^1_{\mu}(X)}\le CN^{-\chi}\text{.}
\end{equation}
\end{proposition}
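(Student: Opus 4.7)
The proof strategy is Calder\'on's transference principle, reducing both inequalities in \eqref{L1saving} to the Gowers-norm bounds from Section~\ref{GNB}. I will describe the plan for the first assertion; the second is entirely analogous in one variable.

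Fix $1$-bounded $f,g\colon X\to\mathbb{C}$. By measure-preservation and commutativity of $T,S$, for every fixed $(y_1,y_2)\in\mathbb{Z}^2$ the change of variable $x\mapsto T^{y_1}S^{y_2}x$ gives
\[
\big\|E_N\big((f\circ T^n\cdot g\circ S^n)_{n\in\mathbb{N}}\big)\big\|_{L^1_{\mu}(X)}=\int_X\bigg|\sum_{n\in[N]}w_N(n)\,f(T^{n+y_1}S^{y_2}x)\,g(T^{y_1}S^{n+y_2}x)\bigg|\,d\mu(x).
\]
The plan is to average this identity over $(y_1,y_2)\in[\pm 2N]^2$, interchange summation and integration by Fubini, and bound the resulting pointwise $y$-average by Proposition~\ref{KeyProp1}.

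Concretely, for each $x\in X$ I introduce the $1$-bounded functions $F_x(y_1,y_2):=f(T^{y_1}S^{y_2}x)$ and $G_x(y_1,y_2):=g(T^{y_1}S^{y_2}x)$ on $\mathbb{Z}^2$, so that the left-hand side becomes
\[
\int_X \mathbb{E}_{y\in[\pm 2N]^2}\bigg|\sum_{n\in[N]} w_N(n)\,F_x(y+ne_1)\,G_x(y+ne_2)\bigg|\,d\mu(x).
\]
To remove the absolute value, I select a $1$-bounded $s_x\colon\mathbb{Z}^2\to\mathbb{C}$ supported in $[\pm 2N]^2$ and equal to the complex conjugate of the sign of the inner sum on that set; I then replace $F_x,G_x$ by their truncations to $[\pm 3N]^2$, which leaves the integrand unchanged since for $y\in[\pm 2N]^2$ and $n\in[N]$ one has $y+ne_i\in[\pm 3N]^2$. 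The integrand becomes exactly the type of expression controlled by Proposition~\ref{KeyProp1}, modulo the harmless enlargement of the ambient box from $[\pm 2N]^2$ to $[\pm 3N]^2$. Inspection of the proof of Lemma~\ref{U3forFRA} shows that the supports of the $f_i$ enter only through volume bounds insensitive to bounded multiplicative perturbations, so the analogue of Proposition~\ref{KeyProp1} with functions supported in $[\pm 3N]^2$ still yields a pointwise bound $\le CN^{-\chi}$; integration against the probability measure $\mu$ produces the first inequality in \eqref{L1saving}.

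For the second assertion I run the same scheme one-dimensionally: using only invariance under $T^y$ and averaging over $y\in[\pm 2N^2]$, I introduce $F_x(y):=f(T^yx)$, dualize the absolute value via $s_x$ supported in $[\pm 2N^2]$, truncate $F_x$ to a box of comparable size, and invoke Proposition~\ref{KeyProp2}. The main obstacle is purely bookkeeping: choosing the size of the averaging window and the truncation so that the pointwise expression indeed falls within the scope of Propositions~\ref{KeyProp1} and \ref{KeyProp2}; once that is arranged, the deduction is immediate since both propositions already supply exactly the $N^{-\chi}$-saving sought here.
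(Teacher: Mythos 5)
Your proposal is correct and is essentially the paper's own Calder\'on-transference argument, differing only cosmetically: the paper dualizes once on $X$ (producing a single $1$-bounded $l$) before transferring and averages over $m\in[N]^2$, so that translates land directly in $[\pm 2N]^2$, while you transfer first, dualize pointwise via $s_x$, and average over $[\pm 2N]^2$, which forces a harmless enlargement of the support box in Proposition~\ref{KeyProp1}. Both routes reduce to the same Gowers-norm bounds and yield the same $N^{-\chi}$ saving.
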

\begin{proof}
By duality, $\mu$-invariance and the fact that $T,S$ commute, there exists a 1-bounded function $l$ such that
\begin{multline*}
\big\|E_N\big((f\circ T^n \cdot g\circ S^n)_{n\in\mathbb{N}}\big)\big\|_{L^1_{\mu}(X)}=\int_Xl(x)\mathbb{E}_{n\in[N]}w_N(n)f(T^nx)g(S^{n}x)d\mu(x)
\\
=\frac{1}{N^2}\sum_{(m_1,m_2)\in[N]^2}\int_Xl(T^{m_1}S^{m_2}x)\mathbb{E}_{n\in[N]}w_N(n)f(T^{m_1+n}S^{m_2}x)g(T^{m_1}S^{m_2+n}x)d\mu(x)
\\
\lesssim\int_X\big|\mathbb{E}_{(m_1,m_2)\in[\pm 2N]^2}\mathbb{E}_{n\in[N]}\big(l(T^{m_1}S^{m_2}x)1_{[N]^2}\big((m_1,m_2)\big)\big)\big(f(T^{m_1+n}S^{m_2}x)1_{[\pm 2N]^2}\big((m_1+n,m_2)\big)\big)\cdot
\\
\cdot\big(g(T^{m_1}S^{m_2+n}x)1_{[\pm 2N]^2}\big((m_1,m_2+n)\big)\big)\big)w_N(n)\big|d\mu(x)\lesssim N^{-\chi}\text{,}
\end{multline*}
where for the last step we note that for every fixed $x\in X$, we have applied Proposition~$\ref{KeyProp1}$ to the obvious functions. The second estimate is established in similar manner. There exists a 1-bounded function $l$ such that
\begin{multline*}
\|E_N\big((f\circ T^{n^2})_{n\in\mathbb{N}}\big)\|_{L^1_{\mu}(X)}=\int_Xl(x)\mathbb{E}_{n\in[N]}w_N(n)f(T^{n^2}x)d\mu(x)
\\
=\frac{1}{N^2}\sum_{m\in[N^2]}\int_Xl(T^mx)\mathbb{E}_{n\in[N]}w_N(n)f(T^{n^2+m}x)d\mu(x)
\\
\le\int_X\big|\mathbb{E}_{m\in[\pm 2N^2]}\mathbb{E}_{n\in[N]}\big(l(T^mx)1_{[N^2]}(m)\big)\big(f(T^{n^2+m}x)1_{[\pm 2N^2]}(n^2+m)\big)w_N(n)d\mu(x)\lesssim N^{-\chi}\text{,}
\end{multline*}
where we have used Proposition~$\ref{KeyProp2}$.
\end{proof}
\begin{proof}[Concluding the proof of Theorem~$\ref{FRA}$]
Fix $c\in[1,23/22)$ and $h\in\mathcal{R}_c$. Let $f,g\in L^{\infty}_{\mu}(X)$ and apply Proposition~$\ref{MaintermExtraction}$ with $f_n=f\circ T^n
\cdot g\circ S^n$ to obtain
\[
\limsup_{N\to\infty}\|\mathbb{E}_{n\in\mathbb{N}_h\cap[N]}f\circ T^n\cdot g\circ S^n-L\|_{L^2_{\mu}(X)}\le \limsup_{N\to\infty}\Big\|\sum_{n\in[N]}\frac{\Phi(-\varphi(n+1))-\Phi(-\varphi(n))}{\lfloor \varphi(N)\rfloor}f\circ T^n\cdot g\circ S^n\Big\|_{L^2_{\mu}(X)}\text{,}
\]
where $L$ is the $L^2_{\mu}(X)$-limit of $\frac{1}{N}\sum_{n=1}^Nf\circ T^n \cdot g\circ S^n$, which is known to exist, see for example \cite{TAO}. Taking into account $\eqref{defen}$, it suffices to prove that $\lim_{N\to\infty}\big\|E_N\big((f\circ T^n \cdot g\circ S^n)_{n\in\mathbb{N}}\big)\big\|_{L^2_{\mu}(X)}=0$. By Proposition $\eqref{L1controlEN}$ we obtain
\begin{equation}\label{L2IM}
\big\|E_N\big((f\circ T^n \cdot g\circ S^n)_{n\in\mathbb{N}}\big)\big\|_{L^2_{\mu}(X)}^2\le \big\|E_N\big((f\circ T^n \cdot g\circ S^n)_{n\in\mathbb{N}}\big)\big\|_{L^1_{\mu}(X)}\big\|E_N\big((f\circ T^n \cdot g\circ S^n)_{n\in\mathbb{N}}\big)\big\|_{L^\infty_{\mu}(X)}\lesssim N^{-\chi}\text{.}
\end{equation}
Here we have used the fact that $\big\|E_N\big((f_n)_{n\in\mathbb{N}}\big)\big\|_{L^\infty_{\mu}(X)}\lesssim 1$ holds for every sequence of $1$-bounded functions. To see this, note that the decomposition from $\eqref{basicfirstsplit}$ yields
\begin{multline}
E_N\big((f_n)_{n\in\mathbb{N}}\big)(x)=\frac{1}{\lfloor \varphi(N)\rfloor}\sum_{n\in [N]}\big(\Phi(-\varphi(n+1))-\Phi(-\varphi(n))\big)f_n(x)
\\
=
\mathbb{E}_{n\in\mathbb{N}_h\cap[N]}f_n(x)-
\frac{1}{\lfloor \varphi(N)\rfloor}\sum_{n\in [N]}\big(\varphi(n+1)-\varphi(n)\big)f_n(x)+O\big(\varphi(N)^{-1}\big)=O(1)\text{.}
\end{multline}
Thus by $\eqref{L2IM}$ we immediately obtain $\lim_{N\to\infty}\big\|E_N\big((f\circ T^n \cdot g\circ S^n)_{n\in\mathbb{N}}\big)\big\|_{L^2_{\mu}(X)}=0$ which yields the desired result.
\end{proof}
\begin{proof}[Concluding the proof of Theorem~$\ref{n2pt}$]
The proof is very similar, essentially using the second assertion of Proposition~$\ref{L1controlEN}$ in an identical manner to the one from section~4 in \cite{NcminusNc}. For the sake of completeness we provide the details here. Let $(X,\mathcal{B},\mu)$ be a probability space and $f\in L^{\infty}_{\mu}(X)$, and fix $\lambda\in(1,2]$. Then there exists $g\in L^{\infty}_{\mu}(X)$ such that
\[
\lim_{N\to\infty}\mathbb{E}_{n\in[N]}f(T^{n^2}x)=g(x)\quad\text{for $\mu$-a.e. $x\in X$, see \cite{bg2}.}
\] 
Let $X_0\in\mathcal{B}$ be of measure $1$ with its elements satisfying the above assertion, and fix $x_0\in X_0$. It is easy to see that the argument in the proof of Proposition~$\ref{MaintermExtraction}$ for the trivial probability space on $\{x_0\}$ and for $f_n(x_0)=f(T^{n^2}(x_0))$ can yield
\begin{multline}
\limsup_{k\to\infty}|\mathbb{E}_{n\in\mathbb{N}_h\cap[\lambda^k]}f(T^{n^2}x_0)-g(x_0)|
\\
\le \limsup_{k\to\infty}\Big|\sum_{n\in[\lambda^k]}\frac{\Phi(-\varphi(n+1))-\Phi(-\varphi(n))}{\lfloor \varphi(N)\rfloor}f(T^{n^2}x_0)\Big|=\limsup_{k\to\infty}\big|E_{\lfloor \lambda^k\rfloor}\big((f\circ T^{n^2})_{n\in\mathbb{N}}\big)(x_0)\big|\text{.}
\end{multline}
By Proposition~$\ref{L1controlEN}$ we obtain 
\[
\Big\|\sum_{k\in\mathbb{N}_0}\big|E_{\lfloor \lambda^k\rfloor}\big((f\circ T^{n^2})_{n\in\mathbb{N}}\big)\big|\Big\|_{L^1_{\mu}(X)}\le\sum_{k\in\mathbb{N}_0}\|E_{\lfloor \lambda^k\rfloor}\big((f\circ T^{n^2})_{n\in\mathbb{N}}\big)\|_{L^1_{\mu}(X)}\lesssim \sum_{k\in\mathbb{N}_0}\lambda^{-\chi k}<\infty\text{,}
\]
which immediately implies
\begin{equation}\label{ptgoalfinal}
\lim_{k\to\infty} E_{\lfloor \lambda^k\rfloor}\big((f\circ T^{n^2})_{n\in\mathbb{N}}\big)(x)=0\quad\text{for $\mu$-a.e. $x\in X$}\text{.}
\end{equation}
Thus
\[
\limsup_{k\to\infty}|\mathbb{E}_{n\in\mathbb{N}_h\cap[\lambda^k]}f(T^{n^2}x)-g(x)|=0\quad\text{for $\mu$-a.e. $x\in X$}\text{,}
\]
which, since $\lambda\in(1,2]$ was arbitrary, implies the desired result.
\end{proof}

\section{Roth's theorem and Corners for fractional powers}\label{ROTHsection}
This section is devoted to the proofs of Theorem~$\ref{RothFra}$, Corollary~$\ref{RothFraPrimes}$ and Theorem~$\ref{corners}$. We begin with establishing the Roth-type estimate \eqref{RothBound}, and  following the ideas developed earlier we pass from the count of configurations of the form $n,n+\lfloor h(m)\rfloor,n+2\lfloor h(m)\rfloor$ to the conventional ones $n,n+k,n+2k$. Our proof relies on the decomposition $\eqref{basicfirstsplit}$, Proposition~$\ref{KeyProp3}$ and the following two theorems. Before stating them, let us introduce some notation.

We let $r_3(N)$ denote the maximal size of a subset of $[N]$ without non-trivial three-term arithmetic progressions and we let $\alpha_3(N)$ be the corresponding density, i.e.:
\begin{equation}\label{RothNotation}\alpha_3(N)\coloneqq\frac{r_3(N)}{N}\text{.}
\end{equation}  
Finally, for every $N\in\mathbb{N}$ we let $\mathbb{Z}_N\coloneqq\mathbb{Z}/N\mathbb{Z}$. 
\begin{theorem}\label{FancyRoth}
There exists a positive constant $C_0$ such that for every $N\in\mathbb{N}$, $\alpha>0$ and $f\colon\mathbb{Z}_N\to [0,1]$ with $\mathbb{E}_{x\in\mathbb{Z}_N}f(x)\ge \alpha$, we have 
\begin{equation}
\mathbb{E}_{x,d\in \mathbb{Z}_N}[f(x)f(x+d)f(x+2d)]\ge C_0 \big(\alpha_3^{-1}(\alpha/2)\big)^{-2}\text{,}
\end{equation}
where $\alpha_3^{-1}$ denotes the inverse of the function defined in $\eqref{RothNotation}$.
\end{theorem}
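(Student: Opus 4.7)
The result is the standard supersaturation (``Varnavides-type'') upgrade of Roth's theorem: from the ``one non-trivial 3-AP exists'' content of Roth, one extracts a quantitative lower bound on the count of 3-APs. Setting $M := \alpha_3^{-1}(\alpha/2)$, my plan is a three-step argument.

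First, I would reduce to an indicator via thresholding. Let $A := \{x \in \mathbb{Z}_N : f(x) \geq \alpha/2\}$; since $f \in [0,1]$ and $\mathbb{E}_x f(x) \geq \alpha$, splitting the expectation as $\alpha \leq \mathbb{E}[f\,1_A] + \mathbb{E}[f\,1_{A^c}] \leq |A|/N + \alpha/2$ gives $|A|/N \geq \alpha/2$, and the pointwise domination $f \geq (\alpha/2)\,1_A$ reduces matters to showing $\mathbb{E}_{x,d}[1_A(x)1_A(x+d)1_A(x+2d)] \gtrsim M^{-2}$, absorbing the resulting $\alpha^3$ factor into $C_0$.

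Next I would run the classical Varnavides averaging on $A$. For each $(a, e) \in \mathbb{Z}_N^2$ consider the length-$M$ progression $P_{a,e} := (a, a+e, \dots, a+(M-1)e)$; averaging gives $\mathbb{E}_{(a,e)}[|A \cap P_{a,e}|/M] = |A|/N \geq \alpha/2$. A first-moment/Markov argument then produces a positive proportion of pairs $(a, e)$ for which the density of $A$ inside $P_{a,e}$ strictly exceeds $\alpha_3(M)$; for all but an $O(M/N)$ fraction of such $(a, e)$ the progression $P_{a,e}$ consists of $M$ distinct elements, and the defining property of $M = \alpha_3^{-1}(\alpha/2)$ then forces the restriction of $A$ to $P_{a,e}$ (viewed as a subset of $[M]$) to contain a non-trivial 3-AP. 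Finally I would double-count: each 3-AP in $A \subseteq \mathbb{Z}_N$ is contained in $O(M^2)$ of the long progressions $P_{a, e}$ — parameterized by the position of its first term in $P_{a,e}$ and by the step-multiplier $r$ such that $e' = re$ — so the total count of 3-APs in $A$ is $\gtrsim N^2 / M^2$ and $\mathbb{E}_{x,d}[1_A(x)1_A(x+d)1_A(x+2d)] \gtrsim M^{-2}$ after dividing by $N^2$.

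The main obstacle I anticipate is the precise coordination of constants in the first-moment step: the density threshold produced by Markov must exceed $\alpha_3(M)$ with some slack, which forces the reduction threshold in the first step to be chosen compatibly with the definition of $M$, entailing a small polynomial-in-$\alpha$ loss that must be absorbed into the universal constant $C_0$. A secondary technicality is the handling of ``wraparound'' long progressions $P_{a,e}$ with $\gcd(e, N)$ large, which can be dealt with either by reducing to the prime case via pigeonhole or by discarding a proportion of at most $O(M/N)$ of the pairs $(a, e)$.
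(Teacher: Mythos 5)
The paper does not actually give a proof of this theorem: it simply cites Theorem~2.1 of Rimani\'c--Wolf \cite{BorysRef}. Your proposal reconstructs the Varnavides supersaturation argument, which is indeed the standard route to a statement of this type, so in spirit your approach is presumably the one behind the cited result. However, as written your sketch does not establish the statement with the claimed \emph{universal} constant $C_0$, and I think this is a genuine gap rather than a fixable technicality of constant-chasing.

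The issue is the accounting of $\alpha$-dependent factors. Your thresholding step passes to the indicator $1_A$ at the cost of a multiplicative factor $(\alpha/2)^3$, and you propose ``absorbing the resulting $\alpha^3$ factor into $C_0$.'' But $C_0$ in the statement is independent of $\alpha$, and $\alpha$ can be arbitrarily small (indeed in the paper's application one takes $\alpha=2\alpha_3(N^{\chi/4})\to 0$), so a factor of $\alpha^3$ cannot be hidden in a universal constant. Moreover, the Markov step in the Varnavides averaging produces a proportion of good pairs $(a,e)$ that is only $\Omega(\alpha)$, not $\Omega(1)$: with $\mathbb{E}_{(a,e)}\bigl|A\cap P_{a,e}\bigr|=\delta M$ and $|A\cap P_{a,e}|\le M$, Markov yields $\mathbb{P}\bigl[\,|A\cap P_{a,e}|>\alpha_3(M)M\,\bigr]\ge\delta-\alpha_3(M)$, which with $\delta\ge\alpha/2$ (post-threshold) and $\alpha_3(M)\le\alpha/2$ for $M=\alpha_3^{-1}(\alpha/2)$ is only guaranteed to be nonnegative; to get positive slack one must take $M$ of order $\alpha_3^{-1}(\alpha/4)$, say, and then the slack is $\gtrsim\alpha$. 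Combining, your argument gives a bound of the shape $c\,\alpha^4\bigl(\alpha_3^{-1}(\alpha/4)\bigr)^{-2}$, which is \emph{strictly weaker} than $C_0\bigl(\alpha_3^{-1}(\alpha/2)\bigr)^{-2}$: since $\alpha_3^{-1}(\alpha/4)\ge\alpha_3^{-1}(\alpha/2)$ and $\alpha^4<1$, the former cannot be converted into the latter by adjusting constants. To hit the stated form one needs a more careful argument — for instance replacing the ``one 3-AP per good progression'' count by a first-moment bound on the \emph{number} of 3-APs inside each $P_{a,e}$, exploiting that a subset of $[M]$ of size $r_3(M)+k$ contains at least $k$ nontrivial 3-APs, or otherwise rebalancing parameters — and it is not clear to me that the plain version of your argument reaches the stated bound. (For the paper's application the polynomial-in-$\alpha$ loss would actually be harmless, because $\alpha_3^{-1}$ grows superpolynomially in $1/\alpha$; but the theorem as stated claims more than your sketch delivers.) The remaining technical points you raise — wraparound for nonprime $N$, embedding the $M$-term progression as $[M]$, the $O(M^2)$ overcount — are correctly identified and are indeed routine.
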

\begin{proof}
See Theorem~2.1 in \cite{BorysRef}.
\end{proof}
\begin{theorem}\label{SOTAest}
There exists a positive constants $C_1$ and $\tau$ such that for all $N\in\mathbb{N}$ we have
\begin{equation}\label{stdrRoth}
\alpha_3(N)\le C_1\exp(-\tau(\log N)^{1/9})\text{.}
\end{equation}
\end{theorem}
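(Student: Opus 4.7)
The statement \eqref{stdrRoth} is the celebrated Kelley--Meka bound for Roth's theorem, in the sharp quantitative form of \cite{BloomSharp}. My plan is essentially to invoke this result as a black box, since its full proof constitutes the main content of the cited paper; below I indicate the strategy one would follow if forced to reproduce it.

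The standard framework is density increment on an ambient arithmetic progression. Given $A \subseteq [N]$ of density $\alpha$ containing no nontrivial three-term arithmetic progression, one works with $1_A$ viewed as a function on $\mathbb{Z}_N$ for a suitable auxiliary $N$ and uses the Fourier counting identity
\[
\sum_{x,d \in \mathbb{Z}_N} 1_A(x)\, 1_A(x+d)\, 1_A(x+2d) = \sum_{\xi \in \mathbb{Z}_N} \widehat{1_A}(\xi)^2\, \overline{\widehat{1_A}(-2\xi)}.
\]
Either $A$ is sufficiently Fourier-uniform, in which case the main term $\gtrsim \alpha^3 N^2$ forces nontrivial three-term progressions and yields a contradiction; or there exists a nontrivial character with a large Fourier coefficient, from which one passes to a subprogression of $[N]$ of length a fixed power of $N$ on which $A$ has strictly greater density.

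The crucial innovation of Kelley--Meka, refined in \cite{BloomSharp}, is to replace the classical additive density increment $\alpha \mapsto \alpha + c\alpha^2$ (which only yields Behrend-type $\exp(-c(\log N)^{1/2})$ bounds after iteration) with a multiplicative increment of the form $\alpha \mapsto (1+c)\alpha$. This is achieved via an almost-periodicity argument for convolutions $1_A * 1_A$ restricted to Bohr-type sets obtained by a sifting procedure, which is the new technical heart of the proof. Iterating the improved increment roughly $(\log \alpha^{-1})^{O(1)}$ many times while carefully tracking the quantitative loss in the size of the ambient progression at each stage produces the bound $\alpha \lesssim \exp\bigl(-\tau (\log N)^{1/9}\bigr)$.

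The main obstacle, were one to attempt this argument from scratch, is establishing the almost-periodicity estimate with the correct quantitative dependence on $\alpha$; this is precisely where the sharp exponent $1/9$ enters, and its derivation requires the delicate combinatorial and Fourier-analytic arguments of \cite{BloomSharp}. Since reproducing these would amount to transcribing the cited paper, my proposal is simply to apply the result directly, as is also done in the present work's subsequent invocation of Theorem~\ref{SOTAest}.
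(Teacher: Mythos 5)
Your proposal takes exactly the same route as the paper: Theorem~\ref{SOTAest} is invoked as a black box with a citation to \cite{KelleyMeka} and \cite{BloomSharp}, and the paper itself gives no independent proof. One small inaccuracy in your expository sketch: the classical additive density increment $\alpha \mapsto \alpha + c\alpha^2$ yields Roth's original $O(1/\log\log N)$ bound upon iteration, not a Behrend-type $\exp(-c(\log N)^{1/2})$ bound (the latter is the strength of the lower-bound construction, not what the additive increment produces); this does not affect the validity of simply citing the result.
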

\begin{proof}
The quasipolynomial decay first appeared in \cite{KelleyMeka}. For an exposition as well as for the slightly sharper result appearing above, see \cite{BLoomExp} and \cite{BloomSharp} respectively.
\end{proof}
\begin{proof}[Proof of Theorem~$\ref{RothFra}$]
Let $N\in\mathbb{N}$, $A\subseteq [N]$, and assume $A$ lacks patterns of the form $n,n+\lfloor h(m)\rfloor,n+2\lfloor h(m)\rfloor$, $n,m\in\mathbb{N}$. Then we have
\begin{equation}\label{zeropatternsconvenient}
\mathbb{E}_{x\in[N],n\in\mathbb{N}_h\cap[N]}1_A(x)1_A(x+n)1_A(x+2n)=0\text{.}
\end{equation}
Letting $f^{A}_n(x)=1_A(x)1_A(x+n)1_A(x+2n)$ and arguing as in the beginning of  Proposition~$\ref{MaintermExtraction}$, see $\eqref{twosummands}$, we get that for every $x\in\mathbb{Z}$ 
\begin{multline}
\sum_{n\in\mathbb{N}_h\cap[N]}f^{A}_n(x)=\sum_{n\in[N]}1_{\mathbb{N}_h}(n)f^{A}_n(x)=\sum_{n\in[N]}\big(\varphi(n+1)-\varphi(n)\big)f^{A}_n(x)
\\
+\sum_{n\in[N]}\big(\Phi(-\varphi(n+1))-\Phi(-\varphi(n))\big)f^{A}_n(x)+O(1)\text{,}
\end{multline}
and
\begin{multline}
\frac{1}{\lfloor \varphi(N)\rfloor}\sum_{n=1}^N\big(\varphi(n+1)-\varphi(n)\big)f^A_n(x)=\frac{N\big(\varphi(N+1)-\varphi(N)\big)}{\lfloor\varphi(N)\rfloor}\mathbb{E}_{n\in[N]}f^A_n(x)
\\
+\sum_{n=1}^{N-1}\frac{n\Big(\big(\varphi(n+1)-\varphi(n)\big)-\big(\varphi(n+2)-\varphi(n+1)\big)\Big)}{\lfloor \varphi(N)\rfloor}\mathbb{E}_{m\in[n]}f^A_m(x)
\\
=\frac{N\big(\varphi(N+1)-\varphi(N)\big)}{\lfloor\varphi(N)\rfloor}\mathbb{E}_{n\in[N]}f^A_n(x)
\\
+\sum_{n=N_h}^{N-1}\frac{n\Big(\big(\varphi(n+1)-\varphi(n)\big)-\big(\varphi(n+2)-\varphi(n+1)\big)\Big)}{\lfloor \varphi(N)\rfloor}\mathbb{E}_{m\in[n]}f^A_m(x)+O(\varphi(N)^{-1})
\text{,}
\end{multline}
where $N_h$ is the smallest natural number such that $\big(\varphi(n+1)-\varphi(n)\big)-\big(\varphi(n+2)-\varphi(n+1)\big)\ge0$ for all $n\ge N_h$. Taking into account the above together with $\eqref{zeropatternsconvenient}$ we obtain
\begin{multline}
0=\mathbb{E}_{x\in[N],n\in\mathbb{N}_h\cap[N]}1_A(x)1_A(x+n)1_A(x+2n)
=\mathbb{E}_{x\in[N]}\bigg[\frac{1}{\lfloor \varphi(N)\rfloor}\sum_{n\in[N]}\big(\varphi(n+1)-\varphi(n)\big)f_n^A(x)\bigg]
\\
+\mathbb{E}_{x\in[N]}\bigg[\sum_{n\in[N]}\frac{\Phi(-\varphi(n+1))-\Phi(-\varphi(n))}{\lfloor \varphi(N)\rfloor}f^{A}_n(x)\bigg]+O(\varphi(N)^{-1})
\\
=\frac{N\big(\varphi(N+1)-\varphi(N)\big)}{\lfloor \varphi(N)\rfloor}\mathbb{E}_{x,n\in[N]}[f_n^A(x)]+P_{h,A}(N)
\\
+\mathbb{E}_{x\in[N]}\bigg[\sum_{n\in[N]}\frac{\Phi(-\varphi(n+1))-\Phi(-\varphi(n))}{\lfloor \varphi(N)\rfloor}f^{A}_n(x)\bigg]+O(\varphi(N)^{-1})\text{,}
\end{multline}
where $P_{h,A}(N)\coloneqq\mathbb{E}_{n\in[N]}\Big[\sum_{n=N_h}^{N-1}\frac{n\big((\varphi(n+1)-\varphi(n))-(\varphi(n+2)-\varphi(n+1))\big)}{\lfloor \varphi(N)\rfloor}\mathbb{E}_{m\in[n]}f^A_m(x)\Big]$ which, crucially, is nonnegative. By Proposition~$\ref{KeyProp3}$ there exist positive constants $C=C(h)$ and  $\chi=\chi(h)<1/c$ such that
\begin{multline}\label{integererrorterm}
\Big|\mathbb{E}_{x\in[N]}\bigg[\sum_{n\in[N]}\frac{\Phi(-\varphi(n+1))-\Phi(-\varphi(n))}{\lfloor \varphi(N)\rfloor}f^{A}_n(x)\bigg]\Big|
\\
\lesssim\big|\mathbb{E}_{x\in[\pm 2N],n\in[N]}1_A(x)1_A(x+n)1_A(x+2n)w_N(n)\big|\lesssim CN^{-\chi}\text{.}
\end{multline}
Thus
\begin{equation}
0=\frac{N\big(\varphi(N+1)-\varphi(N)\big)}{\lfloor \varphi(N)\rfloor}\mathbb{E}_{x,n\in[N]}[f_n^A(x)]+P_{h,A}(N)+O(N^{-\chi})\text{,}
\end{equation}
and therefore
\begin{equation}
\frac{N\big(\varphi(N+1)-\varphi(N)\big)}{\lfloor \varphi(N)\rfloor}\mathbb{E}_{x,n\in[N]}[f_n^A(x)]+P_{h,A}(N)=O(N^{-\chi})\text{.}
\end{equation}
Both of the summands in the left-hand side are positive and thus we get
\begin{equation}
\frac{N\big(\varphi(N+1)-\varphi(N)\big)}{\lfloor \varphi(N)\rfloor}\mathbb{E}_{x,n\in[N]}[f_n^A(x)]=O(N^{-\chi})\text{,}
\end{equation}
and finally, since $\lim_{N\to\infty}\frac{N(\varphi(N+1)-\varphi(N))}{\lfloor \varphi(N)\rfloor}= \frac{1}{c}$, we have get
\begin{equation}
\mathbb{E}_{x,n\in[N]}1_A(x)1_A(x+n)1_A(x+2n)=O(N^{-\chi})\text{.}
\end{equation}
Let $\widetilde{A}\subseteq\mathbb{Z}_{2N+1}$ be the embedding of $A$ into $\mathbb{Z}_{2N+1}$, namely
\[
\widetilde{A}\coloneqq A+(2N+1)\mathbb{Z}\text{.}
\]
By identifying $\mathbb{Z}_{2N+1}$ with $\{0,\dotsc,(2N+1)-1\}$, where addition takes place modulo $2N+1$, we note that there exists a positive constant $C_h$ such that
\begin{multline}\label{MyEstimate}
\mathbb{E}_{x,n\in\mathbb{Z}_{2N+1}}1_{\widetilde{A}}(x)1_{\widetilde{A}}(x+n)1_{\widetilde{A}}(x+2n)
\\
=\frac{1}{(2N+1)^2}\sum_{x\in\mathbb{Z}_{2N+1},n\in\mathbb{Z}_{2N+1}\setminus\{0\}}1_{\widetilde{A}}(x)1_{\widetilde{A}}(x+n)1_{\widetilde{A}}(x+2n)+\frac{|\widetilde{A}|}{(2N+1)^2}
\\
\le\frac{2}{(2N+1)^2}\sum_{x,n\in[N]}1_{A}(x)1_{A}(x+n)1_{A}(x+2n)+\frac{1}{N}\le C_hN^{-\chi}\text{,}
\end{multline}
where for the penultimate estimate we note that for every non-trivial three-term arithmetic progression in $A$ considered on $\mathbb{Z}$ we have precisely two three-term arithmetic progression in $\widetilde{A}$ considered in $\mathbb{Z}_{2N+1}$. This is the reason we chose to embed our set in $\mathbb{Z}_{2N+1}$. It is clear that it suffices to establish the bound $\eqref{RothBound}$ for sufficiently large $N$; we will assume specifically that $N> \big(\frac{C_h}{C_0}\big)^{2/\chi}$, where $C_h$ and $\chi$ are the constants in the estimate $\eqref{MyEstimate}$, and $C_0$ is the constant from Theorem~$\ref{FancyRoth}$. By Theorem~$\ref{FancyRoth}$ we get that \begin{equation}\label{HonestBoundRoth}
|A|\le 6\alpha_3(N^{\chi/4})N\text{.}
\end{equation}
To see this, note that if we assume for the sake of contradiction that $|A|> 6\alpha_3(N^{\chi/4})N$, then
\[
\mathbb{E}_{x\in\mathbb{Z}_{2N+1}}1_{\widetilde{A}}(x)=\frac{|\widetilde{A}|}{2N+1}=\frac{|A|}{2N+1}> \frac{6\alpha_3(N^{\chi/4})N}{2N+1}=\frac{2\alpha_3(N^{\chi/4})3N}{2N+1}\ge 2\alpha_3(N^{\chi/4})\eqqcolon \alpha\text{,}
\]
and an application of Theorem~$\ref{FancyRoth}$ together with the estimate $\eqref{MyEstimate}$ yield
\[
C_hN^{-\chi}\ge \mathbb{E}_{x,n\in\mathbb{Z}_{2N+1}}1_{\widetilde{A}}(x)1_{\widetilde{A}}(x+n)1_{\widetilde{A}}(x+2n)\ge C_0 \big(\alpha_3^{-1}(\alpha/2)\big)^{-2}=C_0 \big(N^{\chi/4}\big)^{-2}=C_0 N^{-\chi/2}\text{,}
\]
and thus $\frac{C_h}{C_0}\ge N^{\chi/2}\iff N\le\big(\frac{C_h}{C_0}\big)^{2/\chi}$, which is a contradiction. Thus we have shown that for all sufficiently large $N$, we have
$|A|\le 6 \alpha_3(N^{\chi/4})N$, and by Theorem~$\ref{SOTAest}$ we get
\begin{equation}\label{calculationdensities}
|A|\le 6 \alpha_3(N^{\chi/4})N\le 6C_1\exp\big(-\tau(\log (N^{\chi/4}))^{1/9}\big)=6C_1\exp\big(-\tau(\chi/4)^{1/9}(\log N)^{1/9}\big)\text{,}
\end{equation}
which establishes the desired result. The proof is complete.
\end{proof}
For the sake of completeness we give a quick proof of Corollary~$\ref{RothFraPrimes}$. 
\begin{proof}[Proof of Corollary~$\ref{RothFraPrimes}$] It suffices to prove that any $A\subseteq \mathbb{P}$ of positive upper relative density contains at least one such pattern. Assume for the sake of contradiction that this is not the case, then by Theorem~$\ref{RothFra}$ and the prime number theorem we get
\begin{equation}\label{Primes3APfree}
\frac{|A\cap[N]|}{|\mathbb{P}\cap[N]|}\lesssim \frac{|A\cap[N]|}{N/\log N}\lesssim \log(N)\exp(-\chi(\log N)^{1/9})\lesssim \exp(-\chi/2(\log N)^{1/9})\text{,}
\end{equation}
and thus $\limsup_{N\to\infty}\frac{|A\cap[N]|}{|\mathbb{P}\cap[N]|}=0$, which is a contradiction. The proof is complete.
\end{proof}
\begin{proof}[Proof-sketch of Theorem~$\ref{corners}$] The proof is very similar to the proof of Theorem~$\ref{RothFra}$ so we only provide a sketch here. For every $N\in\mathbb{N}$ and $A\subseteq [N]^2$ lacking patterns of the form $(n_1,n_2),(n_1+\lfloor h(m)\rfloor,n_2),(n_1,n_2+\lfloor h(m)\rfloor)$, $n_1,n_2,m\in\mathbb{N}$ we may argue in a manner identical to before, using Proposition~$\ref{KeyProp1}$ in place of $\ref{KeyProp3}$ to conclude that there exist positive constants $\chi=\chi(h)<1$ and $C=C(h)$ such that
\begin{equation}\label{basicboundforcorners}
\mathbb{E}_{x\in[N]^2,n\in[N]}1_A(x)1_A(x+ne_1)1_A(x+ne_2)\le CN^{-\chi}\text{.}
\end{equation}
We now use the following corollary of Theorem~1.1 from \cite{cornerpaper}.
\begin{theorem}[Supersaturation for corners]\label{Sat}
There exists a positive constant $C'$ such that the following holds. Assume $N\in\mathbb{N}$, $A\subseteq [N]^2$ is of size $\delta N^2$ with $\delta\in(0,1/10)$ and $N>\exp\big(C'\log(1/\delta)^{600}\big)$.Then we have that $A$ contains at least $N^3\exp\big(-C'\log(1/\delta)^{600}\big)$  patterns of the form $(x,y),(x+d,y),(x,y+d)$, with $d>0$.
\end{theorem}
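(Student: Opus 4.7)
The plan is to view Theorem~\ref{Sat} as a standard supersaturation upgrade of the corner-density bound established in \cite{cornerpaper}, which (in the form we need) asserts that any corner-free set $B \subseteq [M]^2$ satisfies $|B|/M^2 \le \exp(-c(\log M)^{1/600})$ for an absolute constant $c > 0$. Given the data $(A, N, \delta)$ of the statement, I would set $M$ to be the smallest integer with $\exp(-c(\log M)^{1/600}) \le \delta/4$. Solving yields $\log M \simeq \log(1/\delta)^{600}$, and hence $M \le \exp(C_1 \log(1/\delta)^{600})$ for some $C_1 = C_1(c)$. Picking $C'$ larger than $2C_1$ then ensures $M \ll N^{1/2}$ under the hypothesis $N > \exp(C' \log(1/\delta)^{600})$ of the theorem.

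Next, I would consider the ensemble of affine embeddings $\phi_{x_0, y_0, s}\colon [M]^2 \to [N]^2$, $(i,j)\mapsto(x_0 + s(i-1), y_0 + s(j-1))$, indexed by $s \in [1, \lfloor N/M \rfloor]$ together with $(x_0, y_0)$ chosen so that the image lies in $[N]^2$. Uniform averaging delivers $\mathbb{E}_\phi |\phi^{-1}(A)|/M^2 = \delta(1 + o(1))$, so by Markov a positive fraction $\gtrsim \delta$ of the $\simeq N^3/M$ embeddings in this family has pull-back density $\ge \delta/2$, which by the choice of $M$ exceeds the corner-free density $\exp(-c(\log M)^{1/600})$. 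Each such pull-back therefore contains at least one nontrivial corner in $[M]^2$, which $\phi$ lifts to a nontrivial corner $(x,y), (x+ds,y), (x,y+ds)$ in $A$. Dividing the resulting pair-count $\gtrsim \delta N^3/M$ by the maximum multiplicity with which any single corner $(x,y), (x+D,y), (x,y+D)$ of $A$ can appear in the family (bounded by $\tau(D) \cdot M^2 \le N^{o(1)} M^2$, since the dilation $s$ must divide $D$ and lie in $[D/M, N/M]$, with $O(M^2)$ positions available inside $[M]^2$) produces at least $\gtrsim \delta N^{3-o(1)}/M^3 \ge N^3 \exp\bigl(-(3C_1+1)\log(1/\delta)^{600}\bigr)$ distinct corners in $A$. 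Absorbing $3C_1 + 1$ and the $N^{o(1)}$ factor into the final constant $C'$ completes the derivation.

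The main --- and essentially only --- obstacle is to verify that the exponent $1/600$ inherited from \cite{cornerpaper} translates \emph{without degradation} into the same $600$ appearing in Theorem~\ref{Sat}, rather than degrading to $1200$ or worse. This is automatic in the scheme above because both the sub-grid size $M$ and the cubed multiplicity $M^3$ contribute only constant multiples of $\log(1/\delta)^{600}$ to the final exponent, with every other factor either polynomial in $\delta^{-1}$ or sub-polynomial in $N$. Should \cite{cornerpaper} already record Theorem~1.1 in its supersaturation form, the derivation collapses to a one-line extraction; but the random-sub-grid argument outlined above works unconditionally from the density form of the corners theorem alone.
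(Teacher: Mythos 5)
Your approach is the same one the paper sketches: extract the density bound for corner-free subsets of $[M]^2$ from Theorem~1.1 of \cite{cornerpaper}, then upgrade it to supersaturation by a Varnavides-type averaging over the family of dilated sub-grids of $[N]^2$ (the paper names Varnavides' argument and defers the corner-specific details to Appendix~F of \cite{BorysFappendix}); your write-up is in fact more explicit than the paper's ``very rough sketch.''

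There is, however, one step that does not go through as stated. You bound the multiplicity with which a fixed corner $(x,y),(x+D,y),(x,y+D)$ of $A$ appears among the pullbacks by $\tau(D)\cdot M^2\le N^{o(1)}M^2$ and then propose to ``absorb the $N^{o(1)}$ factor into the final constant $C'$.'' But $N^{o(1)}$ is not a constant: the maximal divisor count up to $N$ grows like $\exp\bigl(O(\log N/\log\log N)\bigr)$, and the hypothesis $N>\exp\bigl(C'\log(1/\delta)^{600}\bigr)$ only bounds $\log(1/\delta)^{600}$ from \emph{above} by $\log N/C'$, not from below. So if one takes, say, $\delta=1/20$ fixed (so $\log(1/\delta)^{600}$ is a fixed constant) and lets $N\to\infty$, the divisor factor diverges while $\exp\bigl(C'\log(1/\delta)^{600}\bigr)$ stays bounded, and your count $\gtrsim\delta N^{3-o(1)}/M^3$ falls strictly short of $N^3\exp(-C'\log(1/\delta)^{600})$ for every fixed $C'$. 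The repair is immediate and keeps the rest of your argument intact: the dilations $s$ that can contribute must satisfy $s\mid D$ with cofactor $d=D/s<M$, and the cofactors are distinct positive integers below $M$, so there are at most $M$ admissible values of $s$, giving the clean multiplicity bound $M\cdot M^2=M^3$ with no reference to $\tau(D)$. Dividing the $\gtrsim\delta N^3/M$ good embeddings by this $M^3$ yields $\gtrsim\delta N^3/M^4\ge N^3\exp\bigl(-(4C_1+1)\log(1/\delta)^{600}\bigr)$ distinct corners, and a choice of $C'$ at least $4C_1+1$ (which also dominates the $2C_1$ you need for $M\ll N^{1/2}$) closes the argument.
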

\begin{proof}
Deriving such a result from Theorem~1.1 in \cite{cornerpaper} is standard and we give a very rough sketch here. Firstly, one may obtain a bound of the form $C_0\exp\big(-c_0(\log N)^{1/600}\big)N^2$ for subsets of $[N]^2$ lacking patterns $(x,y),(x+d,y),(x,y+d)$ with $d>0$ by employing Theorem~1.1 from \cite{cornerpaper}, see the comments after the theorem in the aforementioned work. Having established such a result and using Varnavides’ standard argument \cite{VARN} one may immediately conclude. For details on how one may carry out this argument for corners we refer the reader to Appendix~F in \cite{BorysFappendix}.
\end{proof}
It suffices to establish the bound $\eqref{CornerBound}$ for $N\gtrsim 1$, and specifically we assume that $N\ge C^{2/\chi}$, where $C$ and $\chi$ are the constants appearing in $\eqref{basicboundforcorners}$. Let $\delta\coloneqq |A|/N^2$, and without loss of generality $\delta\in(0,1/10)$: if $\delta=0$, then there is nothing to prove, and if $\delta\in[1/10,1]$, we may perform the argument below viewing $A$ as a subset of $[10N+1]$.  If we assume that $N\le \exp\big(C'\log (1/\delta)^{600}\big)$, where $C'$ is as in Theorem~$\ref{Sat}$, then we immediately get
\begin{multline}
N\le \exp\big(C'\log(1/\delta)^{600}\big)\iff \big(\log (N)/C'\big)^{1/600}\le \log(1/\delta)
\\
\iff \log(\delta)\le -\big(\log (N)/C'\big)^{1/600} \iff |A|/N^2\le \exp\big(-(C')^{-1/600}\log(N)^{1/600}\big)\text{,}
\end{multline}
as desired. On the other hand, if we assume that $N>\exp\big(C'\log(1/\delta)^{600}\big)$, then Theorem~$\ref{Sat}$ is applicable and, by taking into account $\eqref{basicboundforcorners}$ as well as the fact that $N\ge C^{2/\chi}$, we obtain
\[
\exp\big(-C'\log(1/\delta)^{600}\big)\le \mathbb{E}_{x\in[N]^2,n\in[N]}1_A(x)1_A(x+ne_1)1_A(x+ne_2)=CN^{-\chi}\le N^{-\chi/2}\text{,}
\]
and thus
\begin{multline}
-C'\log(1/\delta)^{600}\le -(\chi/2)\log(N)\iff \log(1/\delta)^{600}\ge \chi\log(N)/(2C')
\\
\iff -\log(\delta)\ge \big( \chi\log(N)/(2C')\big)^{1/600} 
\iff |A|/N^2\le \exp\Big(- \Big(\frac{\chi}{2C'}\Big)^{1/600}(\log N)^{1/600}\Big)\text{.}
\end{multline}
Since $\chi\in(0,1)$, in either case, we get that 
\[
\frac{|A|}{N^2}\le \exp\Big(- \Big(\frac{\chi}{2C'}\Big)^{\frac{1}{600}}(\log N)^{\frac{1}{600}}\Big)\text{,}
\]
and the proof-sketch is complete.
\end{proof}
\begin{remark}\label{RemarkRoth}
We wish to make two brief remarks. Firstly, the proof of Theorem~$\ref{RothFra}$ establishes something slightly stronger, namely, it is proven that any $A\subseteq [N]$ lacking patterns of the form $n,n+\lfloor h(m)\rfloor,n+2\lfloor h(m)\rfloor$, $n,m\in\mathbb{N}$ satisfies the following density bound
\[
\frac{|A|}{N}\lesssim \alpha_3(N^{\chi'})\text{,}\quad\text{for some $\chi'=\chi'(h)>0$, see $\eqref{HonestBoundRoth}$.}
\]
Taking into account the calculation in $\eqref{calculationdensities}$, this implies that any improvement of the exponent $1/9$ of the logarithm in Theorem~$\ref{SOTAest}$ immediately yields an analogous improvement of the exponent in Theorem~$\ref{RothFra}$. Similarly, any improvement on the exponent $1/600$ for Theorem~1.1 in \cite{cornerpaper} yields an analogous improvement to Theorem~$\ref{corners}$.

Secondly, we also note that in the proof of Corollary~$\ref{RothFraPrimes}$, the estimate $\eqref{Primes3APfree}$ yields a quantitative variant of the result: the bound of the relative density of subsets $A\subseteq \mathbb{P}\cap[N]$ lacking such patterns is of the same form as $\eqref{RothBound}$.
\end{remark}


\begin{thebibliography}{99}

\bibitem{BLoomExp}
T. F. Bloom, O. Sisask,
\emph{The Kelley--Meka bounds for sets free of three-term arithmetic progressions.} Essential Number Theory, 2(1):15–44, 2023.

\bibitem{BloomSharp}
T. F. Bloom, O. Sisask,
\emph{An improvement to the Kelley--Meka bounds on three-term arithmetic progressions.} arXiv:2309.02353.

\bibitem{bg2}
J. Bourgain,
\emph{On the maximal ergodic theorem for certain subsets of the integers.} Israel J. Math. 61 (1988), pp. 39-72.

\bibitem{NcminusNc}
L. Daskalakis,
\emph{Pointwise ergodic theorems for non-conventional bilinear averages along $(\lfloor n^c\rfloor,-\lfloor n^c\rfloor)$.} arXiv:2503.03976.

\bibitem{OpProb}
N. Frantzikinakis, \emph{Some open problems on multiple ergodic averages.} Bull. Hellenic Math. Soc. 60 (2016), 41–90.

\bibitem{Toeplitz}
G. H. Hardy,
\emph{Divergent Series.} (1949) Oxford: Clarendon Press.

\bibitem{cornerpaper}
M. Jaber, Y. P. Liu, S. Lovett, A. Ostuni, M. Sawhney, \emph{Quasipolynomial bounds for the corners theorem.} arXiv:2504.07006.

\bibitem{KelleyMeka}
Z. Kelley, R. Meka, 
\emph{Strong bounds for $3$-progressions.} 2023 IEEE 64th Annual Symposium on Foundations of Computer Science (FOCS), Santa Cruz, CA, USA, 2023, pp. 933-973, doi: 10.1109/FOCS57990.2023.00059.

\bibitem{BorysFappendix}
N. Kravitz, B. Kuca, J. Leng,
\emph{Corners with polynomial side length.} arXiv:2407.08637.

\bibitem{MMR}
M. Mirek, 
\emph{Roth's Theorem in the Piatetski--Shapiro primes.} Rev. Mat. Iberoam. 31 (2015), no. 2, 617–656.

\bibitem{WT11}
M. Mirek,
\emph{Weak type (1, 1) inequalities for discrete rough maximal functions.} J. Anal. Mat. 127 (2015), 303–337.

\bibitem{PelPre}
S. Peluse, S. Prendiville,
\emph{Quantitative bounds in the nonlinear Roth theorem.} Invent. math. 238, 865–903 (2024). https://doi.org/10.1007/s00222-024-01293-x.

\bibitem{Pre}
S. Prendiville,
\emph{Quantitative bounds in the polynomial Szemer\'edi theorem: the homogeneous
case.} Discrete Anal., pages 34, Paper No. 5, 2017.

\bibitem{BorysRef}
L. Rimani\'c, J. Wolf,
\emph{Szemer\'edi's theorem is the primes.}  Edinb. Math. Soc. (2) 62 (2019), no. 2, 443–457.

\bibitem{TAO}
T. Tao,
\emph{Norm convergence of multiple ergodic averages for commuting transformations.} Ergodic Theory and Dynamical Systems. 2008;28(2):657-688. doi:10.1017/S0143385708000011.

\bibitem{VARN}
P. Varnavides, \emph{On certain sets of positive density.} J. London Math. Soc. 1 (1959), no. 3, 358–360.

\end{thebibliography}
\end{document}